\newcommand{\address}[1]{\gdef\@address{#1}}
\newcommand{\email}[1]{\gdef\@email{\url{#1}}}
\newcommand{\@endstuff}{\par\vspace{\baselineskip}\noindent\small
\begin{tabular}{@{}l}\scshape\@address\\\textit{E-mail address:} \@email\end{tabular}}
\author{Damien Junger\footnote{This work has been written in a great part during the author PhD thesis at the ENS Lyon. His work are currently funded by the Deutsche Forschungsgemeinschaft (DFG, German Research Foundation) under Germany's Excellence Strategy EXC 2044 –390685587, Mathematics Münster: Dynamics–Geometry–Structure.}}
\address{Mathematisches Institut, Universität Münster,\\ Fachbereich Mathematik und Informatik der Universität Münster,  Orléans-Ring 10, 48149 Münster, Germany.}
\email{djunger@uni-muenster.de}
\title{Un autre calcul des fonctions inversibles sur l'espace symétrique de Drinfeld}
\newtheorem{theointro}{Th\'eor\`eme}
\newtheorem{theo}{Th\'eor\`eme}[section]
\newtheorem{lem}[theo]{Lemme}
\newtheorem{coro}[theo]{Corollaire}
\newtheorem{prop}[theo]{Proposition}
\newtheorem{propintro}[theointro]{Proposition}
\theoremstyle{definition}
\theoremstyle{remark}
\newtheorem{rem}[theo]{Remarque}
\newtheorem{remintro}{Remarque}
\newtheorem{ex}[theo]{Exemple}
\DeclareMathOperator{\spec}{Spec}
\DeclareMathOperator{\spf}{Spf}
\DeclareMathOperator{\spg}{Sp}
\DeclareMathOperator{\gln}{GL}
\DeclareMathOperator{\pgln}{PGL}
\DeclareMathOperator{\modut}{-Mod}
\DeclareMathOperator{\ens}{Ens}
\DeclareMathOperator{\hhh}{H}
\DeclareMathOperator{\homm}{Hom}
\DeclareMathOperator{\pic}{Pic}
 \newcommand{\iso}{\stackrel{\sim}{\fl}}
\font\tengoth=eufb10
\font\sevengoth=eufb7
\font\fivegoth=eufb5
\def\B{{\mathbb{B}}}
\def\F{{\mathbb{F}}}
\def\G{{\mathbb{G}}}
\def\H{{\mathbb{H}}}
\def\N{{\mathbb{N}}}
\def\P{{\mathbb{P}}}
\def\Q{{\mathbb{Q}}}
\def\Z{{\mathbb{Z}}}
\def\BC{{\mathcal{B}}}
\def\CC{{\mathcal{C}}}
\def\HC{{\mathcal{H}}}
\def\GC{{\mathcal{G}}}
\def\OC{{\mathcal{O}}}
\def\TC{{\mathcal{T}}}
\def\mG{{\mathfrak{m}}}
\def\Of{{\mathscr{O}}}
\def\bar#1{\overline{#1}}
\def\het#1{{\rm H}^{#1}_{\rm ét}}
\def\hdr#1{{\rm H}^{#1}_{\rm dR}}
\def\hgal#1{{\rm H}^{#1}_{\rm Gal}}
\def\et{\text{ et }}
\def\si{\text{Si }}
\def\sinon{\text{Sinon }}
\def\and{\text{ and }}
\def\fl{\rightarrow}
\def\fln#1#2{\xrightarrow[#2]{#1}}
\def\flinj{\hookrightarrow}
\def\fla{\mapsto}
\def\limp{\varprojlim}
\def\som#1#2#3{\sum\limits_{{\substack{#2}}}^{#3}{#1}}
\def\pro#1#2#3{\prod\limits_{{\substack{#2}}}^{#3}{#1}}
\def\uni#1#2#3{\bigcup\limits_{{\substack{#2}}}^{#3}{#1}}
\def\drt#1#2#3{\bigoplus\limits_{{\substack{#2}}}^{#3}{#1}}
\begin{document}

\maketitle

\begin{abstract}
In this article, we give an explicit description of the invertible functions on the Drinfeld symmetric space over $K$ a finite extension of $\mathbb{Q}_p$. We identify them with some distribution spaces over the profinite set of $K$-rationnal points of the projective space. The strategy consists of constructing a map from these distributions to the invertible functions following the methods of Schneider-Stuhler, Iovita-Spiess, de Shalit. We show that it is compatible with the isomorphisms they constructed to compute \'etale and de Rham cohomology in degree $1$ and that this property forces our desired map to be an isomorphism. In particular, we get a $\mathbb{Z}$-structure on these cohomology groups.
\end{abstract}

\tableofcontents
 
\section*{Introduction}

Soit $p$ un nombre premier, et $K$ une extension finie de $\Q_p$. Le but de ce travail est d'étudier et de calculer les fonctions inversibles sur l'espace symétrique de Drinfeld $\H^d_K$ sur $K$ de dimension $d$. Plus précisément, nous obtenons une description explicite de $\Of^*(\H^d_K)/K^*$ (où $\Of^*$ désigne le faisceau des fonctions inversibles) en tant que représentation de $G:=\gln_{d+1} (K)$ et cela constitue le résultat principal de cet article. 

 Ce dernier peut être vu comme un complément à l'article \cite{J1} où nous avons pu calculer toute la cohomologie analytique à coefficient dans $\G_m$ de $\H^d_K$ et nous proposons ici une preuve alternative du cas particulier de degré $0$. Notons que le calcul de $\Of^*(\H^d_K)/K^*$ a aussi fait l'objet de l'article\footnote{ $\bullet$ Pour une description de $\Of^*(\H^d_K)$ en tant qu'extension de $K^*$ et de $\Of^*(\H^d_K)/K^*$, cf. \cite{geh}.
 
 $\bullet$ Voir aussi les travaux de Van Der Put en dimension $1$ \cite{vdp2} ou encore \cite[I.8.9]{frvdp}.}
 \cite{gek}.  Là où cet article se distingue des deux travaux précédemment cités, c'est dans la stratégie et les méthodes employées. En effet, le groupe $\Of^*(X)$ encode des informations cohomologiques importantes  pour un espace rigide lisse $X$. Nous avons par exemple deux applications $d\log : u\in\Of^*(X)\fla du/u\in \hdr{1}(X)$, $\kappa:\Of^*(X)\to \het{1}(X,\Q_l)$ et nous pouvons contrôler le noyau et le conoyau de la dernière  grâce à des invariants fondamentaux de $X$ comme le groupe de Picard via la suite exacte de Kummer. Nous utilisons ici de manière cruciale le calcul des groupes $\hdr{*}(\H^d_K)$, $\het{*}(\H^d_K,\Q_l)$ réalisé entre autres par les travaux précurseurs \cite{scst, iovsp, ds}. En particulier, nous montrons que les représentations décrites dans ces articles déterminent entièrement la structure de $\Of^*(\H^d_K)/K^*$. Ainsi, nous pouvons voir le résultat principal comme une forme de réciproque à la proposition 8.1. de \cite{J1} où l'on retrouve les résultats de \cite{scst} sur $\het{1}(\H^d_K,\Q_l)$ grâce aux calculs de  $\Of^*(\H^d_K)/K^*$ et de  $\pic(\H^d_K)$.

Si $X=Y\backslash D$ est un schéma algébrique, complémentaire d'un diviseur à croisements normaux $D$, le groupe $\Of^*(X)$ a aussi un lien profond avec la géométrie  d'une compactification $Y$. Par exemple, toute fonction méromorphe sur $Y$ dont les pôles et les zéros sont concentrés sur $D$ donne lieu à une fonction inversibles sur $X$. En particulier, on peut étudier les composantes irréductibles de $D$ et leur combinatoire à travers $\Of^*(X)$. Par analogie dans le cas des espaces rigides, nous utilisons de manière essentielle dans la preuve l'existence du  plongement de l'espace symétrique dans l'espace projectif $\H^d_K\subset \P^d_K$ dont le complémentaire est  l'union des hyperplans $K$-rationnels (voir la remarque \ref{remintro} pour plus de détails). 

Avant d'énoncer notre résultat principal, nous aimerions donner une dernière motivation à notre calcul. Du fait de l'annulation du groupe de Picard calculé dans \cite[Théorème 7.1.]{J1}, tous les revêtements cycliques d'ordre premier à $p$ sont de type Kummer et leur structure est entièrement déterminée par $\Of^*(\H^d_K)/K^*$. Nous pouvons par exemple décrire explicitement ceux  qui sont de plus $G$-équivariants. Un exemple fondamental est le premier revêtement de la tour de Drinfeld $(\Sigma^n)_{n\in\N}$ construite dans l'article monumental \cite{dr2}, tour de revêtement qui s'est révélée essentielle pour comprendre et étudier  les correspondances de Jacquet-Langlands et de Langlands local $l$-adique. 
Même si la combinatoire et la géométrie des espaces  $\Sigma^n$  semblent  inaccessibles, nous parviendrons pour le revêtement modéré $\Sigma^1$ à donner une équation explicite à cet espace dans un travail futur.

Enonçons le résultat principal de l'article. Soit $A$ un groupe abélien,  introduisons la représentation Steinberg lisse ${\rm St}_1(A)$ à coefficient dans $A$ (nous décrirons un peu plus tard le lien entre cet objet et les calculs réalisés dans \cite{scst}) et notons $M^\vee$ le $A$-dual algébrique $M^\vee:=
\homm_{A}(M),A)$ d'un $A$-module $M$.
\begin{theointro}\label{INVA}

On a un isomorphisme $G$–équivariant :
\[\Of^*(\H^d_K)/K^*\cong{\rm St}_1(\Z)^\vee.\]

\end{theointro}

Pour comprendre dans quel contexte s'inscrit cet énoncé, nous devons expliquer plus précisément  le calcul réalisé par Schneider et Stuhler \cite{scst}. L'un des intérêts essentiels de leurs résultats se trouvent dans la généralité des  méthodes employées. En effet, elles s'appliquent à toute théorie cohomologique sur la catégorie des espaces analytiques lisses, vérifiant certains axiomes abstraits, le plus important étant l'axiome d'homotopie 
(i.e. la projection $X\times \mathring{\B}^1 \rightarrow  X$ induit un isomorphisme $\hhh^*(X) \iso \hhh^*(X\times \mathring{\B}^1)$ avec $\mathring{\B}^1$ la boule unité ouverte de dimension $1$). On impose aussi 
\[
\hhh^i(\P_K^d)=\begin{cases}
A & \si i=2k\in \llbracket 0,2d \rrbracket \\ 
0 &  \sinon
\end{cases}
\]
pour un anneau artinien $A$ fixé. La cohomologie de de Rham resp. étale $l$-adique vérifient ces hypothèses quand $A=K$ resp. $A=\Q_l$.  

Schneider et Stuhler exhibent alors  une suite de complexes simpliciaux explicite $(\TC_\bullet^{(k)})$ pour lesquels on a
 un isomorphisme $G$–équivariant \cite[§3, Theorem 1.]{scst}
\[\hhh^k(\H_{K}^d)\cong{\rm St}_k(A)^\vee\] avec ${\rm St}_k(A):=\tilde\hhh_{\rm Simp}^{k-1} (|\TC^{(k)}_\bullet |, A)$  la cohomologie simpliciale relative de $\TC^{(k)}_\bullet$.

Le calcul précédent montre que les représentations ${\rm St}_k(K)^\vee$ et ${\rm St}_k(\Q_l)^\vee$ sont des invariants cohomologiques pour l'espace symétrique de Drinfeld. Nous aimerions comprendre quelles théories cohomologiques pourraient être décrites par ${\rm St}_k(A)^\vee$ pour un anneau artinien quelconque. Plus généralement, on peut définir pour tout groupe abélien $A$ la représentation Steinberg via la cohomologie du complexe simplicial $\TC^{(k)}_\bullet$. Quelle interprétation donner à l'objet ${\rm St}_k(A)^\vee$? Par exemple quand $A=\Z$, la représentation ${\rm St}_k(\Z)^\vee$ fournit une $\Z$–structure à ${\rm St}_k(K)^\vee$ et à ${\rm St}_k(\Q_l)^\vee$, et donc une $\Z$–structure aux cohomologies $l$–adique et de de Rham de $\H^d_K$.  Le résultat principal de cet article fournit une telle interprétation quand $k=1$.

Un des aspects techniques du calcul de Schneider-Stuhler provient du fait que l'isomorphisme qu'ils ont exhibé est purement abstrait. Les méthodes de Iovita-Spiess \cite{iovsp} permettent de pallier à ce problème en interprétant ${\rm St}_k(A)^\vee$ comme un espace de distribution\footnote{Il s'agit seulement d'un sous-espace de l'espace totale des distributions qui sont non dégénérées au sens de \ref{paragraphstiovsp}. } à valeur dans $A$ sur l'espace profini $(\P^d(K))^{k+1}$ (nous l'avons noté $\tilde{\rm D}_k(A)$ dans la suite.). Fort de ce point de vu, ils construisent des isomorphismes $\beta^{(k)} : {\rm St}_1 (\Z/l\Z)^\vee\iso \het{k}(\H_{K}^d\otimes C, \mu_l^{\otimes k})$ et $\gamma^{(k)} : {\rm St}_1(K)^\vee\iso \hdr{k} (\H_{K}^d)$ avec $C=\hat{\bar{K}}$ le complété d'une clôture algébrique de $K$. Précisons d'où proviennent ces flèches. Pour tout couple d'hyperplan $(H_{ 0},  H_{1})$, on peut définir une fonction inversible $\frac{l_{H_1}}{l_{H_0}}$ unique modulo les constantes, où $l_{H_i} $ est une forme linéaire s'annulant sur $H_{ i}$. On peut alors construire des applications 
$$f : (H_{ 0},\cdots, H_{ k})\in (\P^d(K))^{k+1} \mapsto\bar{\kappa} (\frac{l_{H_1}}{l_{H_0}})\cup \cdots \cup \bar{\kappa} (\frac{l_{H_k}}{l_{H_0}}) \in \het{k}(\H_{K}^d \otimes C, \mu_l^{\otimes k}),$$
  $$g : (H_{ 0},\cdots, H_{k})\in (\P^d(K))^{k+1}\mapsto d\log (\frac{l_{H_1}}{l_{H_0}})\wedge \cdots \wedge d\log (\frac{l_{H_k}}{l_{H_0}}) \in \hdr{k} (\H_{K}^d),$$ 
avec $\bar{\kappa}$ le morphisme de bord dans la suite exacte longue de Kummer et $\cup$ le cup-produit. Ces applications peuvent être  intégrées le long des distributions considérées. Les isomorphismes $\beta^{(k)}$, $\gamma^{(k)}$ recherchés sont  donnés par les intégrales suivantes   
 $$\beta^{(k)}(\mu):=\int_{(H_{i})_i\in (\P^d(K))^{k+1}} f ((H_{i})_i){\rm d} \mu,\ \ \gamma^{(k)}(\mu):=\int_{(H_{i})_i\in (\P^d(K))^{k+1}} g  ((H_{i})_i) {\rm d} \mu.$$ 
  Le point essentiel est de généraliser ces méthodes quand $A=\Z$ ce qui se traduit par ce résultat intermédiaire.

\begin{propintro}[Théorème \ref{theoprincunite}]\label{propintrointdist}

   Il existe un morphisme 
   $$\alpha : {\rm St}_1 (\Z)^\vee \to   \Of^*(\H_{ K}^d)/K^*$$
   uniquement caractérisé par 
   $$\alpha(\delta_{H_0, H_1})=\frac{l_{H_1}}{l_{H_0}}$$
pour tous $H_0,H_1\in \HC$ où $\delta_{H_{0},H_{1}}$ est la masse de Dirac en $(H_{0},H_{1})$. De plus on dispose de diagrammes commutatifs 
 
 \[
\xymatrix{ 
\tilde{\rm D}_{1} (\Z) \ar[r]^-{\alpha} \ar[d] & \Of^*(\H_{K}^d)/K^*\ar[d]^{\bar{\kappa}}  \\
\tilde{\rm D}_{1} (\Z/l\Z) \ar[r]^-{\beta} & \het{1}(\H_{K }^d\otimes C, \mu_l)  }\ \ \ \ \  
\xymatrix{ 
\tilde{\rm D}_{1} (\Z) \ar[r]^-{\alpha} \ar[d]^{} & \Of^*(\H_{K}^d)/K^*\ar[d]^{d\log}  \\
\tilde{\rm D}_{1} (K) \ar[r]^-{\gamma} & \hdr{1} (\H_{K}^d)  }
\]
avec $\beta=\beta^{(1)}$ et $\gamma=\gamma^{(1)}$.

\end{propintro}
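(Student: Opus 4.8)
The plan is to construct the map $\alpha$ by integration against the explicit family of invertible functions $\frac{l_{H_1}}{l_{H_0}}$, exactly as Iovita–Spiess do for étale and de Rham cohomology, but now with $A = \Z$ and target $\Of^*(\H^d_K)/K^*$. The first and conceptually cleanest step is to record that for any pair of $K$-rational hyperplanes $(H_0, H_1)$ the function $\frac{l_{H_1}}{l_{H_0}}$ is a well-defined element of $\Of^*(\H^d_K)/K^*$ (independent of the choice of the linear forms $l_{H_i}$ up to $K^*$, because $\H^d_K$ avoids every $K$-rational hyperplane), and that the assignment $(H_0,H_1)\mapsto \frac{l_{H_1}}{l_{H_0}}$ is $G$-equivariant and satisfies the cocycle-type relations $\frac{l_{H_1}}{l_{H_0}}\cdot\frac{l_{H_2}}{l_{H_1}} = \frac{l_{H_2}}{l_{H_0}}$ and $\frac{l_{H_0}}{l_{H_1}} = \left(\frac{l_{H_1}}{l_{H_0}}\right)^{-1}$ in $\Of^*(\H^d_K)/K^*$. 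These are precisely the relations needed for the naive ``integration'' to be definable on the Steinberg-type quotient.

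The key step is then to check that this elementary assignment factors through $\tilde{\rm D}_1(\Z)$, i.e.\ that integrating the locally constant $\Of^*(\H^d_K)/K^*$-valued function $(H_0,H_1)\mapsto \frac{l_{H_1}}{l_{H_0}}$ against a distribution $\mu\in\tilde{\rm D}_1(\Z)$ is well-posed and respects the relations defining ${\rm St}_1(\Z)^\vee$ (the non-degeneracy conditions of \S\ref{paragraphstiovsp} and the simplicial/harmonicity relations coming from $\TC^{(1)}_\bullet$). Concretely, since $\tilde{\rm D}_1(\Z)$ is spanned by Dirac masses $\delta_{H_0,H_1}$ modulo the relations dictating $\delta_{H_0,H_1} + \delta_{H_1,H_2} = \delta_{H_0,H_2}$ and the degeneracy $\delta_{H,H}=0$, the formula $\alpha(\delta_{H_0,H_1}):=\frac{l_{H_1}}{l_{H_0}}$ extends by $\Z$-linearity to a group homomorphism precisely because of the multiplicative relations recorded in the first step; one must verify that a general $\mu$, being a finite integer combination of Diracs (or a limit thereof compatible with the profinite topology on $(\P^d(K))^2$), maps to a well-defined element, using that $\Of^*(\H^d_K)/K^*$ is the sections of a sheaf and that the function being integrated is locally constant with finitely many values on each relevant covering. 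The uniqueness of $\alpha$ is then immediate from the density of (combinations of) Dirac masses.

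The commutativity of the two diagrams is the heart of the compatibility statement, and I expect it to reduce to a pointwise identity: one checks that $\bar\kappa\!\left(\frac{l_{H_1}}{l_{H_0}}\right) = f(H_0,H_1) \in \het{1}(\H^d_K\otimes C,\mu_l)$ and $d\log\!\left(\frac{l_{H_1}}{l_{H_0}}\right) = g(H_0,H_1) \in \hdr{1}(\H^d_K)$ at the level of the integrand — these are true essentially by the very definitions of $f$ and $g$ recalled in the introduction — and then that $\bar\kappa$ and $d\log$ commute with integration against $\mu$ (they are continuous additive maps, and integration is a limit of finite sums, so this is formal once the topologies are set up correctly). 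The reduction mod $l$ on the source side, $\tilde{\rm D}_1(\Z)\to\tilde{\rm D}_1(\Z/l\Z)$, is compatible with $\bar\kappa$ because $\bar\kappa$ is precisely the connecting map of the Kummer sequence, so $\bar\kappa\circ\alpha$ only sees the class of the function modulo $l$-th powers.

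The main obstacle I anticipate is \textbf{not} the diagram commutativity (which is formal) but the well-definedness of $\alpha$ on all of ${\rm St}_1(\Z)^\vee$: one must show that the integral $\int \frac{l_{H_1}}{l_{H_0}}\,d\mu$ genuinely converges in $\Of^*(\H^d_K)/K^*$ — equivalently, that the partial products over a covering of $(\P^d(K))^2$ stabilize — and that the non-degeneracy condition on $\mu$ is exactly what kills the boundary terms so that the result lands in the correct subspace $\tilde{\rm D}_1(\Z)$ rather than the full distribution space. Here one cannot simply invoke the étale/de Rham arguments verbatim, because $\Of^*(\H^d_K)/K^*$ is not an $A$-module for an artinian $A$ and has no a priori finiteness; instead one must use the concrete structure of $\H^d_K$ as an increasing union of affinoids (or the Schneider–Stuhler / de Shalit description via the Bruhat–Tits building) to control uniform convergence of the infinite products of the functions $\frac{l_{H_1}}{l_{H_0}}$ on each affinoid piece. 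Once this convergence is established, everything else follows by the linearity and continuity already used in \cite{iovsp}.
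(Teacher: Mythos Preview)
Your plan is correct and matches the paper's approach closely. One point deserves sharpening: the assignment $(H_0,H_1)\mapsto \frac{l_{H_1}}{l_{H_0}}$ is \emph{not} locally constant as a map into $\Of^*(\H^d_K)/K^*$ --- this is exactly why the construction is nontrivial, and your phrasing ``the function being integrated is locally constant with finitely many values'' is not quite right without qualification. The paper's substitute (the precise content of the convergence you correctly anticipate in your last paragraph) is Lemma~\ref{claimlc}: if $(H_{a_1},H_{b_1})\equiv(H_{a_2},H_{b_2})\pmod{\varpi^n}$ then $\frac{l_{a_1}}{l_{b_1}}\cdot\frac{l_{b_2}}{l_{a_2}}\in L^*\Of^{**}_{n-i}(\mathring{U}_{i,L})$ for every $i<n$. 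In other words, the symbol map is locally constant only after passing to the quotient by $L^*\Of^{**}_m$ on each affinoid $\mathring{U}_{i,L}$. Combined with the identifications
\[
\Of^*(\H^d_L)/L^*\cong\varprojlim_i\Of^*(\mathring{U}_{i,L})/L^*,\qquad \Of^*(\mathring{U}_{i,L})/L^*\cong\varprojlim_m\Of^*(\mathring{U}_{i,L})/L^*\Of^{**}_m(\mathring{U}_{i,L}),
\]
this yields the Cauchy-sequence argument defining $\alpha$ level by level (via finite products over representatives of $\HC_n$) and shows independence of the choice of representatives. The diagram commutativity then follows exactly as you say: $d\log$ kills $L^*\Of^{**}(\mathring{U}_{i,L})$ because the logarithm series converges there, so $d\log\circ\alpha_{n,i}$ stabilises in $n$ and equals $\gamma_i$; the argument for $\bar\kappa$ and $\beta$ is identical.
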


\begin{remintro}\label{remintro}

\begin{itemize}
\item Comme dans le calcul de \cite{iovsp}, on peut aussi voir $\alpha$ comme une flèche d'intégration. Montrer que cette dernière est un isomorphisme implique en particulier que le groupe engendré par $\frac{l_{H_1}}{l_{H_0}}$ pour tous $H_0,H_1\in \HC$ est dense dans $\Of^*(\H_{ K}^d)$. Ainsi, toute fonction inversible peut être vu comme une limite de produit fini de formes linéaires homogènes de degré $0$ précisant ainsi le lien entre les fonctions inversibles sur $\H_{ K}^d$ et la compactification $\P_{ K}^d$.
\item Les domaines de périodes  étudier dans \cite{GPW4} peuvent être vu comme une généralisation de l'espace symétrique. Leur géométrie possède des analogies profondes avec celle de $\H_{ K}^d$ et ces espaces admettent des compactifications dont le complémentaire est une union de fermés explicites. De plus leur cohomologie est connue et déterminée dans \cite{orl,GPW4}. La similarité des descriptions des cohomologies obtenues  suggèrent encore l'existence d'une $\Z$-structure naturelle. Nous espérons que les méthodes utilisées dans cet article pourrons aider à les exhiber. 
\end{itemize}

\end{remintro}
 
Le reste de l'argument consiste à prouver que seul un isomorphisme peut vérifier les mêmes compatibilités que $\alpha$ avec $\beta$, $\gamma$. 
Plus précisément, en utilisant la théorie des résidus de de Shalit \cite{ds} nous montrons que 
   l'image de $\alpha$ est un facteur direct. La
compatibilité avec la cohomologie étale géométrique implique que le supplémentaire de l'image de $\alpha$ est $l^\infty$-divisible modulo les constantes. Il s’agira alors d'établir qu’une fonction inversible  $l^\infty$-divisible dans $\Of^*(\H^d_{K})/K^*$  est en fait constante, ce qui se fait en utilisant la géométrie du modèle formel et le lien avec 
l'immeuble de Bruhat-Tits.


\subsection*{Notations et conventions}
 On fixe dans tout ce qui va suivre un nombre premier $p$ et une extension finie $K$ de $\Q_p$, dont on note $\mathcal{O}_K$ l'anneau des entiers, $\varpi$ une uniformisante et $\F=\F_q$ le corps r\'esiduel. On note $C=\hat{\bar{K}}$ le complété d'une clôture algébrique de $K$. 
  Soit $L\subset C$ une extension complète de $K$,  susceptible de varier, d'anneau des entiers $\mathcal{O}_L$, d'idéal maximal  $\mG_L$ et de corps r\'esiduel $\kappa$. 

On note  $\P_{L}^n$ l'espace projectif rigide  de dimension  $n$ sur $L$,  $\B^n_L$  la boule unité et  $\mathring{\B}^n_L$ la boule unité ouverte. Si $X$ est un $L$-espace analytique réduit, on note $\Of^+_X$ le faisceau des fonctions à puissances bornées, $\Of^{++}_X$ le faisceau des fonctions topologiquement nilpotentes,  $\Of^{*}_{X}$ (ou bien $\G_{m,X}$) le faisceau des fonctions inversibles, $\Of^{**}_{X}$ le faisceau $1+ \Of^{++}_X$ et  $\Of^{**}_{m,X}$ le faisceau $1+ \varpi^m\Of^{++}_X$. Pour tout ouvert affinoïde $U\subset X$,  on munit $\Of^*_X (U)$ de la topologie induite par le plongement $\Of^*_X(U)\to\Of_X (U)^2 : f\fla  (f,f^{-1})$ (muni de la norme spectrale). En particulier, $(\Of^{**}_{m,X}(U))_m$ est un base de voisinage de $\Of^*_X (U)$. 

Si $ \Lambda$ est un groupe cyclique d'ordre $N$ premier à  $p$ et $\bar{X}=X\hat{\otimes}_L C$, le morphisme de Kummer  sera noté $\kappa : \Of^* (X)\to  \het{1}(X,\Lambda_X)$ et $\bar{\kappa} :  \Of^* (X)\to  \het{1}(\bar{X},\Lambda_{\bar{X}})$ sera la restriction de $\Of^* (\bar{X})\to  \het{1}(\bar{X},\Lambda_{\bar{X}})$. Si $X$ est lisse, on a aussi un morphisme $d\log :u\in \Of^* (X)\fla d u/u\in  \hdr{1}(X)$.

\subsection*{Remerciements}

Le présent travail a été, avec \cite{J1,J3,J4}, en grande partie réalisé durant ma thèse à l'ENS de lyon,  et a pu bénéficier de nombreux conseils et de l'accompagnement constant de mes deux maîtres de thèse Vincent Pilloni et Gabriel Dospinescu. Je les en remercie très chaleureusement.

\section{Rappels sur la géométrie de l'espace de Drinfeld}

 Nous rappelons quelques résultats standards concernant la géométrie de l'espace symétrique de Drinfeld 
 en renvoyant à  (\cite[section 1]{boca}, \cite[sous-sections I.1. et II.6.]{ds}, \cite[sous-section 3.1.]{dat1}, \cite[sous-sections 2.1. et 2.2]{wa}) pour les détails manquants.
 
 On fixe une extension finie $K$ de $\Q_p$, une uniformisante $\varpi$ de $K$ et un entier $d\geq 1$. On note $\F=\F_q$ le corps résiduel de $K$ et $G={\rm GL}_{d+1}(K)$.  
 \subsection{L'immeuble de Bruhat-Tits} \label{paragraphbtgeosimpstd}

\label{paragraphbtsimp}
Notons $\BC\TC$ l'immeuble de Bruhat-Tits associé au groupe $\pgln_{d+1}(K)$. Le $0$-squelette $\BC\TC_0$ de l'immeuble est l'ensemble des réseaux de $K^{d+1}$ à  homothétie près, i.e. $\BC\TC_0$ s'identifie à  $G/K^*\gln_{d+1}(\OC_K)$. Un $(k+1)$-uplet de sommets 
$\sigma=\{s_0,\cdots, s_k\}\subset \BC\TC_{0} $ est un $k$-simplexe de $\BC\TC_k$ si et seulement si, quitte à  permuter les sommets $s_i$, on peut trouver pour tout $i$ des réseaux $M_i$   avec $s_i=[M_i]$ tels que \[M_0\supsetneq M_1\supsetneq\cdots\supsetneq M_k \supsetneq \varpi M_0.\] La donnée d'un tel ordre sur le sommets est unique à  permutation circulaire près et est déterminée par  le choix du sommet $s_0$ que l'on appellera distingué. Nous appellerons 
$\widehat{\BC\TC}_k$ l'ensemble des $k$-simplexes pointés par un sommet. 


La réalisation topologique de l'immeuble sera notée $|\BC\TC|$. On confondra les simplexes avec leur réalisation topologique de telle manière que $|\BC\TC|=\uni{\sigma}{\sigma\in \BC\TC}{}$. Les différents $k$-simplexes, vus comme des compacts de la réalisation topologique, seront appelés faces. L'intérieur d'une face $\sigma$ sera noté $\mathring{\sigma}=\sigma \backslash \uni{\sigma'}{\sigma'\subsetneq\sigma}{}$ et sera appelé cellule.


 Fixons un simplexe pointé $\sigma\in\widehat{\BC\TC}_{k}$ et considérons une présentation : \[M_0\supsetneq M_1\supsetneq\cdots\supsetneq M_k \supsetneq \varpi M_0=M_{k+1}\] En posant 
 $\bar{M}_i=M_i/\varpi M_0,$
 on obtient un drapeau $$\bar{M}_0\supsetneq \bar{M}_1\supsetneq\cdots\supsetneq \bar{M}_k \supsetneq \bar{M}_{k+1}=0$$ 
dans $\bar{M}_0\cong \F_q^{d+1}$.  On note 
 $$d_i=d+1-{\rm dim}_{\F_q} (\bar{M}_i),\,\, e_i=d_{i+1}-d_{i}.$$
  Nous dirons que le simplexe $\sigma$ est de type $(e_0, e_1,\cdots, e_k)$.
  
  Considérons une base $(\bar{f}_0,\cdots, \bar{f}_d)$ adaptée  au drapeau $(\bar{M}_i)_i$ i.e. telle que $\bar{M}_i=\left\langle \bar{f}_{d_i},\cdots , \bar{f}_{d}\right\rangle$ pour tout $i$. Pour tout choix de relevés $(f_0,\cdots ,f_d)$ de $(\bar{f}_0,\cdots ,\bar{f}_d)$ dans $M_0$, on a
  $$M_i=\left\langle\varpi f_0,\cdots,\varpi f_{d_i -1},  f_{d_i},\cdots,  f_d \right\rangle=\varpi (N_0\oplus \cdots \oplus N_{i-1})\oplus N_i \oplus \cdots\oplus N_k,$$
  où 
  $$N_i=\left\langle f_{d_{i}},\cdots, f_{d_{i+1} -1} \right\rangle.$$
  Si $(f_0,\cdots ,f_d)$ est la base canonique de $K^{d+1}$, nous dirons que $ \sigma $ est le simplexe standard de type $(e_0, e_1,\cdots, e_k)$. En particulier, tout simplexe  de $\BC\TC$ est conjugué sous l'action de $G$ à l'unique  simplexe  standard du même type.






   Considérons la projection naturelle $M_i\setminus M_{i+1}\subset M_i\to (M_i/ \varpi M_i)/\F^*$. On choisit un sous-ensemble $R_i$ de $M_i\setminus M_{i+1}$ qui intersecte chaque fibre de la projection en un point. On fait de même avec $N_i\setminus\varpi N_i$ et la projection $N_i\to (N_i/ \varpi N_i)/\F^*$, obtenant ainsi un sous-ensemble $\tilde{R}_i$ de $N_i\setminus\varpi N_i$.

\subsection{L'espace des hyperplans $K$-rationnels} 

   On note $\HC$ l'ensemble des hyperplans $K$-rationnels dans $\P^d$.  L'ensemble $\mathcal{H}$ est profini car il s'identifie à $\P^d(K)$.






   
 %

Définissons maintenant quelques données relatives à l'ensemble $\HC$.
   Si $a=(a_0,\dots, a_d)\in C^{d+1}\backslash \{0\}$, $l_a$ désignera l'application \[b=(b_0,\dots, b_d)\in C^{d+1} \mapsto \left\langle a,b\right\rangle := \som{a_i b_i}{0\leq i\leq d}{}.\] Ainsi $\HC$ s'identifie à  $\{\ker (l_a),\; a\in K^{d+1}\backslash \{0\} \}$ et à  $\P^d (K)$.
   
   Le vecteur $a=(a_i)_{i}\in C^{d+1}$ est dit unimodulaire si $|a|_{\infty}(:=\max (|a_i|))=1$. L'application 
   $a\mapsto H_a:=\ker (l_a)$ induit une bijection entre le quotient de l'ensemble des vecteurs unimodulaires 
   $a\in K^{d+1}$ par l'action évidente de $\OC_K^*$ et l'ensemble $\mathcal{H}$. 
   
   Pour $a\in K^{d+1}$ unimodulaire et $n\geq 1$ 
   on considère l'application $l_a^{(n)} $ \[b\in (\OC_{C}/\varpi^n)^{d+1}\mapsto \left\langle a,b\right\rangle\in \OC_{C}/\varpi^n\] et on note $$\HC_n =\{\ker (l_a^{(n)}),\; a\in K^{d+1}\backslash \{0\} \; {\rm unimodulaire} \}\simeq\P^d(\OC_K/\varpi^n).$$ Alors $\HC = \varprojlim_n \HC_n$ et chaque 
   $\HC_n$ est fini.


\subsection{Géométrie de l'espace symétrique}
\label{paragraphhdkrectein}

Nous allons maintenant décrire l'espace symétrique de Drinfeld $\H_K^d$. Il s'agit l'ouvert de l'espace projectif de dimension $d$ auquel on a retiré les hyperplan $K$-rationnels i.e.  \[\H_K^d =\P^d_{rig, K}\backslash \uni{H}{H\in \HC}{}.\]

Pour tout $H\in \HC_n$, on choisit un vecteur unimodulaire $a$ tel que $H=\ker(l_a^{(n)}$. On a deux recouvrements admissibles croissants par des ouverts admissibles (ne dépend pas des choix des vecteurs unimodulaires $a$)
$\H^d_K =\uni{\mathring{U}_n}{n> 0}{}=\uni{\bar{U}_n}{n \ge 0}{}$
où 
$$\mathring{U}_n= \{z\in\P^d_{  K} : |\varpi|^{-n} > |\frac{l_a(z)}{l_b(z)}|>|\varpi|^n,\forall a\neq b \in \HC_{n}\}$$
et 
$$ \bar{U}_n= \{z\in\P^d_{  K} : |\varpi|^{-n} \ge |\frac{l_a(z)}{l_b(z)}|\ge |\varpi|^n,\forall a\neq b \in \HC_{n+1}\}
$$ et ces derniers munissent $\H^d_K$ d'une structure naturelle d'espace rigide.

Le recouvrement par les affinoïdes $\bar{U}_n$ est Stein \cite[Proposition 4]{scst}. Le deuxième est cofinal et chaque $ \mathring{U}_n$ est Stein.  
On note $\bar{U}_{n,L}=\bar{U}_{n}\hat{\otimes}_K L$, $\mathring{U}_{n,L}=\mathring{U}_{n} \hat{\otimes}_K L$ et $\H^d_L=\H^d_K\hat{\otimes}_K L$. 

   Il nous sera très utile pour la suite de considérer d'autres recouvrements de $\H^d_K$, reliés à l'application de réduction vers l'immeuble de Bruhat-Tits. 
  On a une application $G$-équivariante \[\tau : \H^d_K(C)\to \{\text{normes sur } K^{d+1}\}/\{\text{homothéties}\} \] donnée par $$\tau (z): v\mapsto |\som{z_i v_i}{i=0}{d}|$$ si $z=[z_0,\cdots, z_d] \in \H_K^d (C)$. L'image $\tau(z)$ ne dépend pas du représentant de $z$ car les normes sont vues à  homothétie près. Le fait de prendre le complémentaire des hyperplans $K$-rationnels assure que $\tau(z)$ est bien séparée et donc  une norme sur $K^{d+1}$.

D’après un résultat classique de Iwahori-Goldmann \cite{iwgo} l'espace des normes sur $K^{d+1}$ à  homothétie près s'identifie bijectivement (et de manière $G$-équivariante) à  l'espace topologique $|\BC\TC|$, ce qui permet de voir $\tau$ comme une application 
$$\tau: \H_K^d(C)\to |\BC\TC|.$$ Plus précisément, pour toute norme sur $|\cdot|$ sur $K^{d+1}$, on peut trouver une base $(f_i)_{0\le i\le d}$ de $K^{d+1}$ telle que
$|\sum_i a_i f_i|=\sup_i |a_i||f_i|$ pour tous $a_i$, et 
$ |\varpi|\le |f_{i+1}|\le |f_i|\le 1$ pour $i<d$.
Les boules $B(0,r)$ centrées en $0$ et de rayon $r$ forment une suite croissante de réseaux et on a $B(0,|\varpi|r)=\varpi B(0,r)$. Se donner les rayons minimaux de ces réseaux entre $|\varpi|$ et $1$ revient à se donner une famille de nombres $(t_i)_{0\le i\le k}\in ]0,1[^{k+1}$ dont la somme vaut $1$.  En particulier, on peut associer à la norme $|\cdot|$ le point $x$ de la cellule $\mathring{\sigma}$ de poids  $(t_i)_i$ avec $\sigma=\{B(0,r)\}_r$. De plus, $|\cdot|$ est déterminée par les réseaux $B(0,r)$.


\subsection{Les recouvrements par les tubes au-dessus des faces et des cellules\label{paragraphhdksimpfer}}

Soit $\sigma\in\BC\TC_{k}$ un simplexe de type $(e_0, e_1,\cdots, e_k)$ de présentation : \[M_0\supsetneq M_1\supsetneq\cdots\supsetneq M_k \supsetneq \varpi M_0.\] Nous chercherons à  décrire $$\H_{K,\sigma}^d:=\tau^{-1} (\sigma), \,\, \H_{K,\mathring{\sigma}}^d:=\tau^{-1} (\mathring{\sigma})$$ en suivant \cite[Section 6.3]{ds}. 

Comme dans \ref{paragraphbtgeosimpstd}, donnons-nous une base adaptée  $(f_0,\cdots, f_d)$ et les objets $N_i$, $R_i$ et $\tilde{R}_i$ qui s'en déduisent. Tous les vecteurs de $K^{d+1}$ seront écrits dans cette base. Si $x=\sum_{i} x_i f_i$ et $y=\sum_{i} y_i f_i$ on note $\langle x,y\rangle=\sum_{i} x_i y_i$.
Pour simplifier, nous allons écrire $l_i(z)=\langle z, f_{d_i}\rangle$. L'espace recherché $\H_{K,\sigma}^d(C)$ est l'ensemble des points $z\in\P^d_{  K}(C)$ pour lesquels l'ensemble des boules fermées de $\tau(z)$ sont contenues dans $\sigma$. Justifions l'égalité 
$$\H_{K,\sigma}^d=\{z\in \P^d_{ K} | \, \forall a\in M_i\backslash M_{i+1}, 1=|\frac{\left\langle z,a \right\rangle}{l_i(z)}|\,\text{et}\,\, |l_0(z)|\geq\cdots \geq |l_i(z)|\geq\cdots\geq |\varpi| |l_0(z)|\}.$$
Par construction, les éléments $f_{d_i}$ sont dans $M_i\backslash M_{i+1}$. Le premier jeu d'égalités garantit que tous les éléments de $M_i$  ne rencontrant pas $M_{i+1}$ ont même norme pour $\tau (z)$. La chaîne d'inégalités entraîne que les rayons des boules $M_i$ sont cohérents. On n'en déduit l'inclusion en sens direct. Réciproquement, prenons $z$ vérifiant les inégalités ci-dessus et intéressons-nous à une boule de rayon $r$ de $\tau(z)$ pour $|\varpi||l_0(z)|\le r< |l_0(z)|$. Il existe un entier $i$ pour lequel $|l_{i-1}(z)|>r\geq|l_i(z)|$. Deux cas de figure se présentent :
\begin{itemize}
\item Si $a$ est dans $M_i$, $r\geq|l_i(z)|\geq|\left\langle z,a \right\rangle|$.
\item  Sinon, $|\left\langle z,a \right\rangle|\geq|l_{i-1}(z)|>r$.
\end{itemize}
  Ainsi, $M_i=B(0,r)$ ce qui prouve l'inclusion en sens inverse.

 Par ultramétrie, il suffit de vérifier ces inégalités pour un système de représentants modulo $\varpi$ et $\OC_K^*$. Ainsi:
$$\H_{K,\sigma}^d=\{z\in \P^d_{ K} |\, \forall a\in R_i,\, 
1=|\frac{\left\langle z,a \right\rangle}{l_i(z)}|\,\text{et}\,\, |l_0(z)|\geq\cdots \geq |l_i(z)|\geq\cdots\geq |\varpi| |l_0(z)|\}.$$
Par finitude de $R_i$, l'espace ci-dessus est bien un ouvert rationnel affinoïde. 

 Soit $1\leq j\leq d$ un entier et soit $i_0$ l'unique entier tel que $d_{i_0}<j\leq d_{i_0+1}$. On pose     
 $$X_j=\frac{\left\langle z,f_j \right\rangle}{l_{i_0}(z)},\,\, X_0= \varpi\frac{l_0(z)}{l_k(z)}.$$ On a $| X_j | \le 1$ avec \'egalit\'e si $j \neq d_{i_0+1}$. On obtient un système de coordonnées $(X_0,\cdots,X_d)$ qui vérifie $\pro{X_{d_{i}}}{i=0}{k}=\varpi$. Pour tout $a$ les quantités $\frac{\left\langle z,a \right\rangle}{l_i(z)}$ s'expriment comme des polynômes $P_a (X_0,\cdots, X_d)$ à  coefficients dans $\OC_K$. En posant $P_\sigma=\pro{\pro{P_a}{a\in R_i}{}}{i=0}{k}$, on obtient la description suivante:

\begin{prop} On a 
$$\H_{K,\sigma}^d=\{(X_0,\cdots,X_d)\in \B^{d+1}_{K}|\,  \pro{X_{d_{i}}}{i=0}{k}=\varpi\,\, \text{et}\,\, 
|P_{\sigma}(X)|=1\}.$$
\end{prop}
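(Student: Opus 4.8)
The plan is to establish the proposition by producing two morphisms of affinoids inverse to one another: one from $\H^d_{K,\sigma}$, taken in the guise of the rational domain of $\P^d_K$ obtained just above, onto the right-hand side $Y:=\{(X_0,\dots,X_d)\in\B^{d+1}_K\mid\prod_{i=0}^k X_{d_i}=\varpi,\ |P_\sigma(X)|=1\}$, and one in the reverse direction.

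First I would check that $z\mapsto(X_0(z),\dots,X_d(z))$ defines a morphism $\phi\colon\H^d_{K,\sigma}\to Y$. On $\H^d_{K,\sigma}$ the $l_i$ are invertible, so the $X_j$ are regular there; the identity $\prod_{i=0}^k X_{d_i}=\varpi$ follows at once from $X_0=\varpi\,l_0(z)/l_k(z)$ and $X_{d_i}=l_i(z)/l_{i-1}(z)$ for $1\le i\le k$; the bounds $|X_j|\le1$ are already recorded; and $|P_\sigma(X)|=\prod_i\prod_{a\in R_i}|\langle z,a\rangle/l_i(z)|=1$ since every factor is $1$ by one of the defining inequalities of $\H^d_{K,\sigma}$.

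Next I would construct the inverse. For $1\le j\le d$ let $i_0=i_0(j)$ be the index with $d_{i_0}<j\le d_{i_0+1}$ (as in the definition of $X_j$), set $z_0=1$ and $z_j=X_j\prod_{m=1}^{i_0(j)}X_{d_m}$; these are monomials in the $X_m$ with coefficient $1$, hence $|z_j|\le1$, and since $z_0=1$ the tuple $(z_0,\dots,z_d)$, written in the basis $(f_i)$, defines a $K$-morphism $\psi\colon Y\to\P^d_K$. The crucial point is that $\psi$ factors through $\H^d_{K,\sigma}$, which I would obtain by translating the defining inequalities: one has $l_0(z)=z_0=1$ and $l_i(z)=z_{d_i}=\prod_{m=1}^i X_{d_m}$, so the chain $|l_0(z)|\ge\cdots\ge|l_k(z)|$ amounts to $|X_{d_i}|\le1$ $(i\ge1)$ and $|l_k(z)|\ge|\varpi||l_0(z)|$ to $|X_0|\le1$ through the relation $\prod_i X_{d_i}=\varpi$, while every $|P_a(X)|\le1$ (the $P_a$ have $\OC_K$-coefficients and $|X_m|\le1$) together with $\prod_i\prod_{a\in R_i}|P_a(X)|=|P_\sigma(X)|=1$ forces $|\langle z,a\rangle/l_i(z)|=|P_a(X)|=1$ for all $a\in R_i$ — precisely the remaining conditions cutting out the rational domain.

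Finally I would check that $\phi$ and $\psi$ are mutually inverse. Since $z_{d_{i_0(j)}}=\prod_{m=1}^{i_0(j)}X_{d_m}$, we get $X_j(\psi(X))=z_j/z_{d_{i_0(j)}}=X_j$ for $j\ge1$ and $X_0(\psi(X))=\varpi z_0/z_{d_k}=\varpi/\prod_{m=1}^k X_{d_m}=X_0$ by the product relation; in the other direction $\psi(\phi(z))$ has homogeneous coordinates $z_j/l_0(z)$, hence equals $z$ in $\P^d$. Thus $\phi$ and $\psi$ are inverse isomorphisms of affinoid $K$-spaces, which is the asserted description. The only step needing care is the bookkeeping with the indices $i_0(j)$ and, when a general ratio $\langle z,a\rangle/l_i(z)$ is rewritten as the polynomial $P_a(X)$, the use of $\prod_i X_{d_i}=\varpi$ to absorb the negative powers of the $X_{d_m}$ and keep the coefficients in $\OC_K$ — exactly the point already settled in the construction of the $P_a$, so I do not expect any essential obstacle beyond routine ultrametric estimates.
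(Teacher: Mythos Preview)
Your argument is correct and is essentially the approach the paper takes, only made fully explicit: the paper merely records the coordinates $X_j$, the relation $\prod_i X_{d_i}=\varpi$, the bounds $|X_j|\le 1$, and the fact that each $\langle z,a\rangle/l_i(z)$ is a polynomial $P_a\in\OC_K[X]$, and then states the proposition as an immediate consequence. You have supplied the missing bookkeeping by writing down the inverse map $\psi$ and checking both composites, which is exactly what the paper leaves to the reader.
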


On en déduit que $\H_{K,\sigma}^d$ admet un modèle entier $\H_{\OC_K,\sigma}^d =\spf  (\hat{A}_\sigma)$ ou $\hat{A}_\sigma$ est le complété $p$-adique de 

\begin{equation}\label{eqhatasigma}
\OC_K [X_0,\cdots, X_d,\frac{1}{P_\sigma}]/(\pro{X_{d_{i}}}{i=0}{k}-\varpi).
\end{equation}


 On obtient alors $ \H_{K,\sigma}^d=\spg (A_\sigma)$ avec $A_\sigma=\hat{A}_\sigma[\frac{1}{\varpi}]$. De même, la fibre spéciale est donnée par $ \H_{\F,\sigma}^d=\spec (\bar{A}_\sigma)$ avec \[\bar{A}_\sigma=\hat{A}_\sigma/\varpi=\F [X_0,\cdots, X_d,\frac{1}{\bar{P}_\sigma}]/(\pro{X_{d_{i}}}{i=0}{k}). \] 
 

Des arguments identiques fournissent 

$$\H_{K,\mathring{\sigma}}^d=\{z\in \P^d_{ K} |\,  \forall a\in R_i, \ 1=|\frac{\left\langle z,a \right\rangle}{l_i(z)}| \,\,\text{et}\, \, 
 |l_0(z)|>\cdots > |l_i(z)|>\cdots> |\varpi| |l_0(z)|\}$$
et l'on peut remplacer $R_i$ par $\tilde{R}_i$. 

 Considérons les affinoïdes

\[C_r= \{x=(x_1,\cdots, x_r)\in \B^r_K|\, \forall a\in \OC^{r+1}_K\backslash \varpi \OC^{r+1}_K, \  1=|\left\langle (1, x),a \right\rangle| \},\]
\[A_k=\{y=(y_1,\cdots, y_k)\in \B^k_K|\,  1>|y_1|>\cdots>|y_k|>|\varpi|\}\]
et les morphismes 
$$ \H_{K,\mathring{\sigma}}^d \rightarrow C_{e_i -1}, \,\, 
  [z_0,\cdots, z_d]  \mapsto  (\frac{ z_{d_{i}+1} }{ z_{d_{i}} },\frac{ z_{d_{i}+2} }{ z_{d_{i}} },\cdots, \frac{ z_{d_{i+1}-1} }{ z_{d_{i}} })\et $$
 $$\H_{K,\mathring{\sigma}}^d \rightarrow A_k,\,\, 
  [z_0,\cdots, z_d]  \mapsto  (\frac{ z_{d_{1}} }{ z_{d_{0}} },\frac{ z_{d_{2}} }{ z_{d_{0}} },\cdots, \frac{ z_{d_{k}} }{ z_{d_{0}} }).$$

\begin{prop}[\cite{ds} 6.4]\label{propdécompsimpouv} Les morphismes ci-dessus induisent un isomorphisme
\[ \H_{K,\mathring{\sigma}}^d \cong A_k \times \pro{C_{e_i-1}}{i=0}{k}\cong A_k \times C_\sigma.\]
\end{prop}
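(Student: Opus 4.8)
The idea is to unwind both the source and target of the two maps into a product of pieces for which the isomorphism is transparent, and to check the two candidate morphisms are mutually inverse on coordinates. First I would fix a base $(f_0,\dots,f_d)$ adapted to the flag $M_0\supsetneq\cdots\supsetneq M_k$ as in \ref{paragraphbtgeosimpstd}, so that $N_i=\langle f_{d_i},\dots,f_{d_{i+1}-1}\rangle$ and $\H^d_{K,\mathring\sigma}$ has, by the rational-domain description already established, the explicit equations: the coordinates $z_j/z_{d_i}$ for $d_i<j<d_{i+1}$ are of absolute value $<1$, those $z_{d_i}/z_{d_{i-1}}$ have modulus strictly between $|\varpi|$ and $1$ with product $|\varpi|$, and the ``integrality'' conditions $1=|\langle z,a\rangle/l_i(z)|$ for $a\in\tilde R_i$ hold. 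The point is that these conditions split along the index $i$: the conditions on the block $N_i\setminus\varpi N_i$ (i.e.\ $a\in\tilde R_i$ and $|z_j/z_{d_i}|<1$ for $d_i<j<d_{i+1}$) only involve the ratios $z_j/z_{d_i}$ with $d_i\le j<d_{i+1}$, while the chain of strict inequalities only involves the ratios $z_{d_i}/z_{d_0}$.

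Concretely, I would define the map $\Phi:\H^d_{K,\mathring\sigma}\to A_k\times\prod_{i=0}^k C_{e_i-1}$ to be the product of the two morphisms written just before the statement, and the inverse $\Psi$ by the rule: given $(y_1,\dots,y_k)\in A_k$ and $(x^{(i)})_{0\le i\le k}$ with $x^{(i)}=(x^{(i)}_1,\dots,x^{(i)}_{e_i-1})\in C_{e_i-1}$, reconstruct the homogeneous coordinates by $z_{d_0}=1$, $z_{d_i}=y_i$ for $i\ge 1$, and $z_{d_i+m}=y_i\,x^{(i)}_m$ for $1\le m\le e_i-1$ (with $y_0:=1$). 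One then checks: (a) this point lies in $\P^d_K$ since $z_{d_0}=1\ne 0$; (b) it satisfies the defining inequalities of $\H^d_{K,\mathring\sigma}$ — the block conditions come from $x^{(i)}\in C_{e_i-1}$ (the unimodularity condition $1=|\langle(1,x^{(i)}),a\rangle|$ is exactly $1=|\langle z,a\rangle/l_i(z)|$ for the representatives $a\in\tilde R_i$, and $|x^{(i)}_m|\le1$ gives $|z_{d_i+m}/z_{d_i}|\le1$), while the chain $1>|y_1|>\cdots>|y_k|>|\varpi|$ gives exactly $|l_0(z)|>\cdots>|l_k(z)|>|\varpi||l_0(z)|$; (c) $\Phi\circ\Psi=\mathrm{id}$ and $\Psi\circ\Phi=\mathrm{id}$ on the nose by the explicit formulas. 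Since both $\Phi$ and $\Psi$ are given by ratios of coordinates, hence by morphisms of affinoid algebras, this exhibits the desired isomorphism; the second displayed identification $A_k\times\prod_i C_{e_i-1}\cong A_k\times C_\sigma$ is then just the definition $C_\sigma:=\prod_{i=0}^k C_{e_i-1}$.

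I expect the only genuine subtlety — and hence the main obstacle to state carefully rather than to prove — is the verification that the conditions $1=|\langle z,a\rangle/l_i(z)|$ for \emph{all} $a\in M_i\setminus M_{i+1}$ (equivalently, that $\tau(z)$ restricted to $N_i$ is, up to scaling, the sup-norm in the $f_j$-basis) are equivalent to the finite block conditions defining $C_{e_i-1}$, and that nothing couples across different blocks. This is where the ultrametric reduction modulo $\varpi$ and $\OC_K^*$ and the choice of the representative sets $\tilde R_i\subset N_i\setminus\varpi N_i$ enter; once one is comfortable that $\tilde R_i$ really does detect the sup-norm on the quotient $N_i/\varpi N_i$, the remaining bookkeeping of exponents $d_i$, $e_i=d_{i+1}-d_i$ is routine. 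I would also remark that this is precisely the content of \cite[6.4]{ds}, so in the write-up I would either cite it directly or present the coordinate change above as the explicit form of that isomorphism.
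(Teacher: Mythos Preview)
The paper does not give its own proof of this proposition: it is stated with the attribution \cite[6.4]{ds} and no argument. Your proposal therefore goes beyond what the paper does; since you yourself note at the end that one could simply cite de Shalit, you are in agreement with the paper's choice.

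Your explicit coordinate argument is the right one and is essentially what underlies \cite[6.4]{ds}: the inverse map $\Psi$ you write down does land in $\H^d_{K,\mathring\sigma}$, the two composites are the identity on coordinates, and you correctly isolate the only non-formal step, namely that on the \emph{open} cell the conditions $1=|\langle z,a\rangle/l_i(z)|$ for $a\in M_i\setminus M_{i+1}$ decouple into block conditions indexed by $i$ (this is exactly the passage from $R_i$ to $\tilde R_i$ already asserted just before the proposition, which uses the strict inequalities $|l_{i-1}(z)|>|l_i(z)|$).

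One small slip in your description: the ratios $z_j/z_{d_i}$ for $d_i<j<d_{i+1}$ are of absolute value exactly $1$, not $<1$. Indeed, applying the defining condition of $C_{e_i-1}$ to a standard basis vector gives $|x^{(i)}_m|=1$; this matches the paper's remark (for the closed simplex) that $|X_j|\le 1$ with equality when $j\neq d_{i_0+1}$. This does not affect the argument, but you should correct it in the write-up.
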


\section{Distributions et flèche d'intégration $\alpha$}

\subsection{Enoncé du résultat principal}

  Si $A$ est un groupe abélien, on note\footnote{Notons que $\tilde{\rm D}_1(A)$ coïncide avec la $G$-représentation ${\rm St}_1(A)$ mentionnée dans l'introduction pour $A=K$ ou $A=\Z/m\Z$ ($m$ premier à $p$) d'après \cite[§3, Theorem 1.]{scst} et \cite[lemme 3.2 + théorème 4.5]{iovsp}.}  
$$\tilde{\rm D}_1(A):=A\left\llbracket \HC\right\rrbracket^0:=\varprojlim_n A[\HC_n]^0,$$
où $A[\HC_n]^0$ est l'ensemble des $\mu\in  A[\HC_n]$ de masse totale nulle. L'action naturelle de 
$G$ sur $\HC$ induit une action de $G$ sur $\tilde{\rm D}_1(A)$.

Nous noterons $\H^d_{ L}:=\H^d_{K}\hat{\otimes} L$. Notre résultat principal s'énonce alors

\begin{theo}\label{theoprincunite}

Il existe un  isomorphisme $G$-\'equivariant \[\alpha : \tilde{\rm D}_1 (\Z) \simeq\Of^*(\H^d_{ L})/L^*.\]
\end{theo}



  Notre preuve met l'emphase sur les différentes
compatibilités entre les théories cohomologiques et s'inscrit dans la continuité des travaux de Schneider-Stuhler 
\cite{scst} et Iovita-Spiess \cite{iovsp}. 
Plus précisément, Iovita et Spiess ont construit des isomorphismes 
 \[\beta : \tilde{\rm D}_{1} (\Z/l\Z)\iso \het{1}(\H_{ C}^d, \mu_l)\et \gamma : \tilde{\rm D}_{1} (L)\iso \hdr{1} (\H_{ L}^d)\]
 en interprétant les éléments de $ \tilde{\rm D}_{1} (\Z/l\Z)$ et $\tilde{\rm D}_{1} (L)$ comme des mesures sur 
 $\HC^2$ et en intégrant des symboles en cohomologie étale et de de Rham (en fait de tels isomorphismes existent en tout degré cohomologique, comme nous allons le rappeler plus tard). En utilisant leur méthode nous montrons l'existence d'un morphisme $G$-équivariant 
$$\alpha :\tilde{\rm D}_1 (\Z)\to \Of^*(\H^d_{ L})/L^*$$
compatible avec les morphismes $\beta$ et $\gamma$ ie. les diagrammes suivant sont commutatifs
\[
\xymatrix{ 
\tilde{\rm D}_{1} (\Z) \ar[r]^-{\alpha} \ar[d] & \Of^*(\H_{ L}^d)/L^*\ar[d]^{\bar{\kappa}}  \\
\tilde{\rm D}_{1} (\Z/l\Z) \ar[r]^-{\beta} & \het{1}(\H_{ C}^d, \mu_l)  }\ \ \ \ \  
\xymatrix{ 
\tilde{\rm D}_{1} (\Z) \ar[r]^-{\alpha} \ar[d]^{} & \Of^*(\H_{ L}^d)/L^*\ar[d]^{d\log}  \\
\tilde{\rm D}_{1} (L) \ar[r]^-{\gamma} & \hdr{1} (\H_{ L}^d)  }
.\]
   En utilisant la théorie des résidus de de Shalit \cite{ds} nous montrons ensuite que 
   l'image de $\alpha$ est un facteur direct. La
compatibilité avec la cohomologie étale géométrique implique \ref{lemlinfty} que le supplémentaire de l'image de $\alpha$ est $l^\infty$-divisible modulo les constantes. Il s’agira alors d'établir par \ref{theoostrosen} qu’une fonction inversible  $l^\infty$-divisible dans $\Of^*(\H^d_{ L})/L^*$  est en fait constante.

\subsection{Cohomologie de $\H^d_C$, symboles et distributions}\label{paragraphstiovsp}

  Nous allons rappeler, suivant Iovita et Spiess \cite{iovsp} le calcul de la cohomologie \'etale 
  $l$-adique et de de Rham de $\H^d_C$ par intégration des symboles. Nous adapterons leur méthode dans le paragraphe suivant pour étudier les fonctions inversibles sur $\H^d_C$. 

  Soit $k\geq 0$ un entier.
 Rappelons que $\mathcal{H}$ désigne l'ensemble des hyperplans $K$-rationnels et que  $\mathcal{H}^{k+1}=\varprojlim_n \HC_n^{k+1}$ est un espace profini. Si $M$ est un $A$-module, on note $M^{\vee}={\rm Hom}_A(M,A)$ son $A$-dual algébrique. 
        L'ensemble des distributions sur $\HC^{k+1}$ à valeurs dans $A$, ie.  le dual algébrique des fonctions localement constantes sur $\HC^{k+1}$ à valeurs dans $A$, est
        défini par \[{\rm Dist}(\HC^{k+1},A):={\rm LC}(\HC^{k+1},A)^\vee\simeq\varprojlim_n A[\HC_n^{k+1}],\]
        les flèches de transition  $A[\HC_{n+1}^{k+1}]\to A[\HC_n^{k+1}]$ dans le système projectif étant induites par les projections naturelles  $\HC_{n+1}^{k+1}\to \HC_n^{k+1}$.
         L'isomorphisme ci-dessus provient du fait que la donnée d'une distribution est équivalente à celle d'une fonction des ouverts de $\HC^{k+1}$ dans $A$, additive sur les éléments  de la base de voisinage usuelle. De manière équivalente, cela revient à écrire 
         $ {\rm LC}(\HC^{k+1},A)=\varinjlim_{n} {\rm Hom}_{\rm Ens}(\HC_n^{k+1}, A)$ et à identifier ${\rm Hom}_{\rm Ens}(\HC_n^{k+1}, A)^{\vee}=A[\HC_n^{k+1}]$.
         
Pour tout vecteur unimodulaire $a$, on notera $l_a$, $l_a^{(n)}$ les formes linéaires associées  et $H_a= \ker(l_a)\in \HC$ (ou encore $H_a= \ker(l_a^{(n)})\in \HC_n$) les hyperplans associés.

 Si $M$ est un $A$-module, notons   $\Lambda_k (M)\subset {\rm LC}(\HC^{k+1},M)$ l'ensemble des fonctions localement constantes $f: \HC^{k+1}\to M$ vérifiant les deux relations   

\begin{enumerate}[label= \roman*)]
\item $f(H_{a_0},\cdots,H_{a_k})=0$ si $a_0,...,a_k$ sont linéairement dépendants. \label{reli}
\item $\som{(-1)^j f(H_{a_0},\cdots, \widehat{H_{a_j}},\cdots, H_{a_{k+1}}}{0\leq j\leq k+1}{})=0$ pour tous $H_{a_0}, \cdots, H_{a_{k+1}}$.\label{relii}
\end{enumerate}

On définit 
\[\tilde{\rm D}_k (A) =\Lambda_k (A)^\vee={\rm Dist}(\HC^{k+1},A)/{\rm Dist}(\HC^{k+1},A)_{deg}\] où ${\rm Dist}(\HC^{k+1},A)_{deg}$ est l'ensemble des distributions (dites dégénérées) s'annulant sur $\Lambda_k (A)$. On vérifie facilement que 
\[\tilde{\rm D}_k (A) =\varprojlim_n A[\HC_{n}^{k+1}]/I_{k,n} = \varprojlim_n \tilde{\rm D}_{k,n} (A) \] avec  $I_{k,n}$ l'ensemble des formes de $A[\HC_{n}^{k+1}]=\homm_{\ens}(\HC_n^{k+1}, A)^\vee$ qui s'annulent sur $\Lambda_k (A)\cap\homm_{\ens}(\HC_n^{k+1}, A)$ si l'on voit $\homm_{\ens}(\HC_n^{k+1}, A)\subset{\rm LC}(\HC^{k+1},A)$.

\begin{ex}
Dans les exemples ci-dessous d'éléments de $\Lambda_k (A)$, nous souhaitons mettre l'emphase sur les conditions \ref{reli} et \ref{relii} ci-dessus. Le caractère localement constant n'est pas évident et est démontré dans le lemme \ref{claimlc} qui va suivre (cf. aussi \cite{iovsp} pour le second et le troisième exemple). Soit $L$ une extension complète de $K$ et notons $\mathring{U}_{i,L}=\mathring{U}_{i} \hat{\otimes} L$.
\begin{enumerate}
\item[-] En posant $M=\Of^*( \mathring{U}_{i,L})/L^*\Of^{**}_m ( \mathring{U}_{i,L})$, l'application  \[f:(H_a, H_b)\mapsto \frac{l_a}{l_b}\]   est un élément de $\Lambda_1 (M)$. La condition \ref{reli} revient à  écrire $f(H,H)=1$. La condition \ref{relii} est équivalente à  la relation $\frac{l_a}{l_c}=\frac{l_a}{l_b}\frac{l_b}{l_c}$.
\item[-] En posant $M=\hdr{k} ( \mathring{U}_{i,L})$, l'application  \[g:(H_{a_0},\cdots, H_{a_k})\mapsto  d\log \frac{l_{a_1}}{l_{a_0}}\wedge d\log \frac{l_{a_2}}{l_{a_0}}\wedge \cdots \wedge d\log \frac{l_{a_k}}{l_{a_0}}\]  est un élément de $\Lambda_k (M)$.
\item[-] En posant $M=\het{k} ( \mathring{U}_{i,C}, \mu_l^{\otimes k})$ pour $l$ premier à  $p$ et en notant $\cup$ le cup-produit
et $\bar{\kappa}(f)\in \het{1}( \mathring{U}_{i,C}, \mu_l)$ l'image de $f\in \Of^*( \mathring{U}_{i,C})$ par l'application de Kummer, l'application
 \[h:(H_{a_0},\cdots, H_{a_k})\mapsto  \bar{\kappa} (\frac{l_{a_1}}{l_{a_0}})\cup \bar{\kappa} (\frac{l_{a_2}}{l_{a_0}})\cup \cdots \cup \bar{\kappa} (\frac{l_{a_k}}{l_{a_0}})\] est un élément de $\Lambda_k (M)$.
\end{enumerate}

\end{ex}

\begin{rem}\label{remident}

Quand $k$ vaut $1$, les modules $\tilde{\rm D}_{1,n}(A)$ ont une interprétation explicite. On peut les voir comme les ensembles $A[\HC_n]^0$ de fonctions $f$ de $A[\HC_n]$ de masse totale nulle. Ainsi, le choix d'un système de représentants $S_n$ de $\HC_n$ induit une identification pour tout $i$ \[\tilde{\rm D}_{1,n}(\Z)\cong \left\langle \frac{l_a}{l_b} :a\neq b\in S_n\right\rangle_{\Z\modut}\subset\Of^* (\mathring{U}_{i,L}).\] De même, on voit $\tilde{\rm D}_1(\Z)$ comme $\Z\left\llbracket \HC\right\rrbracket^0=\varprojlim_n \Z[\HC_n]^0$. 

\end{rem}

  Iovita et Spiess ont construit dans \cite{iovsp} des morphismes (pour $l\ne p$ premier)

 \[ \beta^{(k)} : \tilde{\rm D}_{k} (\Z/l\Z)\to \het{k} (\H_{ C}^d, \mu_l^{\otimes k}) \ {\rm et } \ \gamma^{(k)} : \tilde{\rm D}_{k} (L)\to \hdr{k} (\H_{ L}^d)\]
 à partir des symboles en cohomologie étale et de de Rham, i.e. à partir des fonctions $g$ et $h$ dans l'exemple ci-dessus. Plus précisément, ces morphismes sont uniquement caractérisés par le fait que 
 \[\gamma^{(k)}(\delta_{(H_{a_0},\cdots,H_{a_k})})= g(H_{a_0},\cdots,H_{a_k})\et \beta^{(k)}(\delta_{(H_{a_0},\cdots,H_{a_k})})=h(H_{a_0},\cdots,H_{a_k})\]
 où $\delta_{(H_{a_0},\cdots,H_{a_k})}$ est la masse de Dirac en $(H_{a_0},\cdots,H_{a_k})\in \HC^{k+1}$. 
  Nous allons écrire 
 $$\beta^{(k)}(\mu):=\int_{(H_{a_0},\cdots, H_{a_k})\in \HC^{k+1}} \bar{\kappa} (\frac{l_{a_1}}{l_{a_0}})\cup \cdots \cup \bar{\kappa} (\frac{l_{a_k}}{l_{a_0}}) {\rm d} \mu$$
 et de même avec $\gamma^{(k)}$. 
 
 Le théorème principal de \cite{iovsp} (voir lemme 3.2 et théorème 4.5 dans \textit{loc. cit}) est le suivant : 

\begin{theo}\label{theoiovsp}    
Les morphismes $\beta^{(k)}$ et $\gamma^{(k)}$ sont des isomorphismes $G$-équivariants \[ \beta^{(k)} : \tilde{\rm D}_{k} (\Z/l\Z)\simeq \het{k} (\H_{ C}^d, \mu_l^{\otimes k}) \ {\rm et } \ \gamma^{(k)} : \tilde{\rm D}_{k} (L)\simeq\hdr{k} (\H_{ L}^d).\]

\end{theo}

En fait, un résultat récent décrit la cohomologie étale à coefficients dans $\mu_N$ pour des entiers $N$ non nécessairement premier à $p$. Nous n'en aurons pas besoin pour  la suite. 

\begin{theo}[\cite{GPW3} Theorem 5.1.]\label{theocohopadiquehdk}
On a un isomorphisme pour tous entiers $N,k$ :
\[ \het{k}(\H_{ C}^d, \mu_N^{\otimes k}) \cong \tilde{\rm D}_k(\Z/N \Z).\]
\end{theo}

\subsection{Fonctions inversibles et distributions}
 
   Le résultat principal de cette section est le suivant. 
 
\begin{prop}\label{propintdist}

   Il existe un morphisme 
   $$\alpha : \tilde{\rm D}_1 (\Z) \to   \Of^*(\H_{ L}^d)/L^*$$
   uniquement caractérisé par 
   $$\alpha(\delta_{H_a, H_b})=\frac{l_a}{l_b}$$
   pour tous $H_a,H_b\in \HC$. De plus on dispose de diagrammes commutatifs 
 
 \[
\xymatrix{ 
\tilde{\rm D}_{1} (\Z) \ar[r]^-{\alpha} \ar[d] & \Of^*(\H_{ L}^d)/L^*\ar[d]^{\bar{\kappa}}  \\
\tilde{\rm D}_{1} (\Z/l\Z) \ar[r]^-{\beta} & \het{1}(\H_{ C}^d, \mu_l)  }\ \ \ \ \  
\xymatrix{ 
\tilde{\rm D}_{1} (\Z) \ar[r]^-{\alpha} \ar[d]^{} & \Of^*(\H_{ L}^d)/L^*\ar[d]^{d\log}  \\
\tilde{\rm D}_{1} (L) \ar[r]^-{\gamma} & \hdr{1} (\H_{ L}^d)  }
\]
avec $\beta=\beta^{(1)}$ et $\gamma=\gamma^{(1)}$.

\end{prop}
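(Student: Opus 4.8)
Le plan est de réaliser $\alpha$ comme une flèche d'intégration $\mu\mapsto\int_{\HC^2}\frac{l_a}{l_b}\,{\rm d}\mu$, dans l'esprit de \cite{iovsp}, c'est-à-dire comme une limite de produits finis de fonctions $\frac{l_a}{l_b}$, puis d'en déduire la commutativité des deux diagrammes. Le point central est l'estimée uniforme suivante sur l'affinoïde $\mathring{U}_{i,L}$: si $i<n$ et si $a,b\in\OC_K^{d+1}$ sont unimodulaires avec $H_a=H_b$ dans $\HC_n$ (i.e. $a\equiv b\pmod{\varpi^n}$), alors
\[|l_a/l_b-1|_{\mathring{U}_{i,L}}\le|\varpi|^{n-i}.\]
Ainsi $l_a/l_b$ tend vers $1$ dans $\Of^*(\mathring{U}_{i,L})$ lorsque $n\to\infty$ ($i$ fixé). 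Pour établir cette estimée, on écrit $a-b=\varpi^n a'$ avec $a'\in\OC_K^{d+1}$, d'où $\frac{l_a}{l_b}-1=\varpi^n\frac{l_{a'}}{l_b}$, et il reste à majorer $|l_{a'}(z)/l_b(z)|$ par $|\varpi|^{-i}$ sur $\mathring{U}_{i,L}$: comme $a'$ est à coefficients entiers on a $|l_{a'}(z)|\le\max_j|z_j|$, et la définition de $\mathring{U}_i$ donne $\max_j|z_j|\le|\varpi|^{-i}|l_b(z)|$ sur $\mathring{U}_{i,L}$ (le cas où l'hyperplan de coordonnée réalisant $\max_j|z_j|$ coïncide avec $H_b$ modulo $\varpi^i$ se traite directement par ultramétricité).

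On construit alors $\alpha$ comme une limite de produits finis. Comme $\HC_n$ est fini, la composante $\mu_n\in\Z[\HC_n]^0$ de $\mu=(\mu_n)_n\in\tilde{\rm D}_1(\Z)$ est une combinaison $\Z$-linéaire finie de différences d'hyperplans; en relevant les hyperplans de $\HC_n$ en des hyperplans $K$-rationnels et les formes linéaires en des vecteurs unimodulaires, on en tire un produit fini $P_n(\mu)$ de fonctions inversibles globales $l_a/l_b\in\Of^*(\H^d_K)$, bien défini modulo $L^*$ et, sur chaque $\mathring{U}_{i,L}$, à multiplication près par un élément de $\Of^{**}_m(\mathring{U}_{i,L})$ avec $m$ croissant en $n$ (pour les choix de relevés et de vecteurs unimodulaires, par l'estimée clé). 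La compatibilité $\mu_{n+1}\mapsto\mu_n$ fait de $P_{n+1}(\mu)/P_n(\mu)$ un produit de termes $l_{a'}/l_a$ avec $a'\equiv a\pmod{\varpi^n}$, donc proche de $1$ sur $\mathring{U}_{i,L}$; la suite $(P_n(\mu))_n$ est ainsi de Cauchy dans $\Of^*(\H^d_L)/L^*=\varprojlim_i\Of^*(\mathring{U}_{i,L})/L^*$, qui est complet, et l'on pose $\alpha(\mu):=\lim_n P_n(\mu)$. On utilise ici que $\H^d_L=\bigcup_i\mathring{U}_{i,L}$ avec des affinoïdes croissants, d'où $\Of^*(\H^d_L)=\varprojlim_i\Of^*(\mathring{U}_{i,L})$, et que le $\varprojlim^1$ du système constant $(L^*)_i$ est nul. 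On obtient un morphisme de groupes continu, et pour $\mu=\delta_{H_a,H_b}$ avec $H_a\ne H_b$ on a $\mu_n=[H_a]-[H_b]$ dans $\HC_n$ pour $n$ grand, donc $P_n(\mu)=l_a/l_b$ stationne et $\alpha(\delta_{H_a,H_b})=l_a/l_b$. Enfin, les combinaisons $\Z$-linéaires finies de masses de Dirac étant denses dans $\tilde{\rm D}_1(\Z)=\varprojlim_n\Z[\HC_n]^0$ (leur image dans chaque $\Z[\HC_n]^0$ est le groupe entier), l'équivariance sous $G$, claire sur les masses de Dirac, et l'unicité de $\alpha$ parmi les morphismes continus en résultent.

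Pour les deux diagrammes, on observe que $\beta=\beta^{(1)}$ et $\gamma=\gamma^{(1)}$ sont eux aussi des flèches d'intégration, d'intégrandes respectifs $\bar{\kappa}(l_a/l_b)$ et $d\log(l_a/l_b)$, i.e. les images de $l_a/l_b$ par les applications de Kummer et $d\log$. Comme $l$ est premier à $p$, les groupes $\Of^{**}_m(\mathring{U}_{i,C})$ sont $l$-divisibles, donc annulés par l'application de Kummer: celle-ci est localement constante sur $\Of^*(\mathring{U}_{i,C})/C^*$, donc continue; de même $d\log$ envoie $\Of^{**}_m(\mathring{U}_{i,L})$ dans $\varpi^m\Omega^1(\mathring{U}_{i,L})$, donc est continue. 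Comme $\alpha$, $\beta$, $\gamma$ sont des flèches d'intégration et que $\bar{\kappa}$, $d\log$ sont continues, celles-ci commutent à l'intégration; la commutativité des deux diagrammes se ramène donc à celle sur les masses de Dirac, où elle ne fait que traduire les relations entre les symboles $l_a/l_b$, $\bar{\kappa}(l_a/l_b)$ et $d\log(l_a/l_b)$ qui définissent $\alpha$, $\beta$ et $\gamma$. Le principal obstacle est l'estimée clé, à savoir le contrôle uniforme sur les $\mathring{U}_{i,L}$ issu de la géométrie explicite de $\H^d_K$; la continuité de $\bar{\kappa}$ et de $d\log$ (elle repose sur la $l$-divisibilité des $\Of^{**}_m$ pour $l$ premier à $p$ et sur la compatibilité des topologies sur les cohomologies) en est l'ingrédient secondaire à vérifier.
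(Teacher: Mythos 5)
Votre preuve est correcte et suit essentiellement la même démarche que celle de l'article : même lemme clé de congruence ($l_a/l_b\in L^*\Of^{**}_{n-i}(\mathring{U}_{i,L})$ pour des hyperplans congrus modulo $\varpi^n$), même construction de $\alpha$ comme limite de produits finis le long de $\varprojlim_n\Z[\HC_n]^0$ et de $\varprojlim_i\Of^*(\mathring{U}_{i,L})/L^*$, caractérisation sur les masses de Dirac et unicité par densité, puis réduction des deux diagrammes aux Dirac via la continuité de $\bar{\kappa}$ et de $d\log$. Seule imprécision mineure : $H_a=H_b$ dans $\HC_n$ signifie $a\equiv\lambda b\pmod{\varpi^n}$ pour un $\lambda\in\OC_K^*$ (et non $a\equiv b$), mais cela n'affecte rien puisque tout est considéré modulo $L^*$.
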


Comme ci-dessus, nous allons écrire

\[\alpha(\mu):=\int_{(H_{a}, H_{b})\in \HC^{2}}   \frac{l_{a}}{l_{b}}{\rm d} \mu\in  \Of^*(\H_{ L}^d).\]

\begin{proof}   Le point crucial est le lemme suivant:

\begin{lem}\label{claimlc}

Si $(H_{a_1}, H_{b_1})$ et $(H_{a_2}, H_{b_2})$ sont dans $\HC^2$ et vérifient $(H_{a_1}, H_{b_1})\equiv (H_{a_2}, H_{b_2})\pmod{\varpi^n}$ alors pour $i<n$, \[\frac{l_{a_1}}{l_{b_1}}\frac{l_{b_2}}{l_{a_2}}\in L^*\Of^{**}_{n-i} ( \mathring{U}_{i,L}).\]

\end{lem}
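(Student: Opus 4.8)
The plan is to reduce the statement to a single elementary estimate on linear forms. First I would unwind the congruence hypothesis. Since $\HC_n\simeq\P^d(\OC_K/\varpi^n)$, the relation $(H_{a_1},H_{b_1})\equiv(H_{a_2},H_{b_2})\pmod{\varpi^n}$ means that $a_1\equiv u a_2$ and $b_1\equiv v b_2\pmod{\varpi^n\OC_K^{d+1}}$ for suitable units $u,v\in\OC_K^*$. Replacing $a_2$ by $ua_2$ and $b_2$ by $vb_2$ only multiplies $\frac{l_{a_1}}{l_{b_1}}\frac{l_{b_2}}{l_{a_2}}$ by the constant $u/v\in K^*\subseteq L^*$, so we may assume from the start that $a_1-a_2=\varpi^n c$ and $b_1-b_2=\varpi^n c''$ with $c,c''\in\OC_K^{d+1}$, all four vectors still unimodular. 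Then $\frac{l_{a_1}}{l_{a_2}}=1+\varpi^n\frac{l_c}{l_{a_2}}$ and $\frac{l_{b_2}}{l_{b_1}}=1-\varpi^n\frac{l_{c''}}{l_{b_1}}$, and since $\varpi^{n-i}\Of^{++}(\mathring{U}_{i,L})$ is stable under addition and multiplication, the product of these two factors lies in $1+\varpi^{n-i}\Of^{++}(\mathring{U}_{i,L})=\Of^{**}_{n-i}(\mathring{U}_{i,L})$ as soon as each of $\varpi^n\frac{l_c}{l_{a_2}}$ and $\varpi^n\frac{l_{c''}}{l_{b_1}}$ belongs to $\varpi^{n-i}\Of^{++}(\mathring{U}_{i,L})$. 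Equivalently, it suffices to prove that $\varpi^i\frac{l_c}{l_{a_2}}$ and $\varpi^i\frac{l_{c''}}{l_{b_1}}$ are topologically nilpotent on $\mathring{U}_{i,L}$.

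The heart of the argument is the following estimate, read off directly from the definition of $\mathring{U}_i$: for every unimodular $a\in K^{d+1}$ and every $z\in\mathring{U}_{i,L}(C)$ normalised so that $|z|_\infty=1$, one has $|l_a(z)|>|\varpi|^i$. To see this, let $e_0,\dots,e_d$ be the standard basis of $K^{d+1}$, so that $H_{e_j}$ is the $j$-th coordinate hyperplane, and choose an index $j_0$ with $|z_{j_0}|=1$. If the reduction of $a$ is $\ne\bar e_{j_0}$ in $\HC_i$, then $(H_a,H_{e_{j_0}})$ is a pair of distinct hyperplanes of $\HC_i$ and the defining inequalities of $\mathring{U}_i$ give $|l_a(z)|>|\varpi|^i|z_{j_0}|=|\varpi|^i$. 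If instead $\bar a=\bar e_{j_0}$ in $\HC_i$, unimodularity forces $a=\lambda e_{j_0}+\varpi^i c'$ with $\lambda\in\OC_K^*$ and $c'\in\OC_K^{d+1}$, so $|l_a(z)|=|\lambda z_{j_0}+\varpi^i l_{c'}(z)|=1>|\varpi|^i$ because $|\varpi^i l_{c'}(z)|\le|\varpi|^i<1$. (This uses $i\ge 1$, which is implicit in the statement since $\mathring{U}_i$ is only defined for $i>0$; and the inequality is strict — this is precisely what yields $\Of^{**}_{n-i}$ rather than $\Of^{**}_{n-i-1}$.) Since $c\in\OC_K^{d+1}$ gives $|l_c(z)|\le|z|_\infty=1$, applying the estimate to $a_2$ and to $b_1$ yields $|\varpi^i\frac{l_c}{l_{a_2}}(z)|<1$ and $|\varpi^i\frac{l_{c''}}{l_{b_1}}(z)|<1$ at every point of $\mathring{U}_{i,L}$.

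It remains to upgrade this pointwise strict inequality to genuine topological nilpotence, i.e. to membership in $\Of^{++}(\mathring{U}_{i,L})$, and this is the one delicate point. Here I would use the Stein structure: write $\mathring{U}_{i,L}$ as an increasing union of affinoids $V_m$ (\cite[Proposition 4]{scst} and the description in \S\ref{paragraphhdkrectein}); on each compact $V_m$ the strict inequalities cutting out $\mathring{U}_i$ hold uniformly, so $|\varpi^i\frac{l_c}{l_{a_2}}|_{V_m}<1$ for every $m$ — equivalently, one base-changes to $C$, where the spectral norm on each $V_m$ is computed on classical points. Either way $\varpi^i\frac{l_c}{l_{a_2}}\in\Of^{++}(\mathring{U}_{i,L})$, and likewise $\varpi^i\frac{l_{c''}}{l_{b_1}}\in\Of^{++}(\mathring{U}_{i,L})$; feeding this back into the first paragraph gives $\frac{l_{a_1}}{l_{b_1}}\frac{l_{b_2}}{l_{a_2}}\in\frac{u}{v}\,\Of^{**}_{n-i}(\mathring{U}_{i,L})\subseteq L^*\Of^{**}_{n-i}(\mathring{U}_{i,L})$, as desired. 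I expect the main obstacle to be exactly this last step — passing from a pointwise bound to topological nilpotence on the non-quasi-compact space $\mathring{U}_{i,L}$ — whereas the reduction and the coordinate-hyperplane estimate are routine manipulations with unimodular vectors and the ultrametric inequality.
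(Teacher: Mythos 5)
Your argument is correct and follows essentially the same route as the paper: factor out a unit so that $a_1-a_2\in\varpi^n\OC_K^{d+1}$, write $\frac{l_{a_1}}{l_{a_2}}=1+\varpi^{n-i}\bigl(\varpi^i\frac{l_c}{l_{a_2}}\bigr)$, observe that $\varpi^i\frac{l_c}{l_{a_2}}\in\Of^{++}(\mathring{U}_{i,L})$ by the defining inequalities of $\mathring{U}_i$, treat the $b$'s symmetrically, and conclude by multiplicative stability of $L^*\Of^{**}_{n-i}$. The only difference is that you spell out the pointwise estimate $|l_a(z)|>|\varpi|^i|z|_\infty$ (and its upgrade to topological nilpotence via the maximum principle on the affinoids exhausting $\mathring{U}_{i,L}$), which the paper simply asserts as following from the definition of $\mathring{U}_{i,L}$.
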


\begin{proof}
Prenons $\tilde{a}$, un vecteur unimodulaire tel que $a_1 =\lambda (a_2 +\varpi^{n'}\tilde{a})$ avec $n'\geq n> i$ et $\lambda\in \OC_K^*$, alors $$\frac{l_{a_1}}{l_{a_2}}= \lambda(1+\varpi^{n' -i}(\varpi^i \frac{l_{\tilde{a}}}{l_{a_2}})).$$ Mais $\varpi^i \frac{l_{\tilde{a}}}{l_{a_2}}\in \Of^{++} ( \mathring{U}_{i,L})$ par définition de $\mathring{U}_{i,L}$ (cf paragraphe \ref{paragraphhdksimpfer}) d'où $\frac{l_{a_1}}{l_{a_2}}\in  L^*\Of^{**}_{n-i} ( \mathring{U}_{i,L})$. On raisonne de même pour $(b_1, b_2)$ et on conclut par stabilité multiplicative de  $ L^*\Of^{**}_{n-i} ( \mathring{U}_{i,L})$.
\end{proof}

Remarquons les isomorphismes 
  $$\Of^*(\H_{ L}^d)/L^* \cong \varprojlim_{i} \Of^*( \mathring{U}_{i,L})/L^*, \quad \Of^*( \mathring{U}_{i,L})/L^*\simeq \varprojlim_m\Of^*( \mathring{U}_{i,L})/L^*\Of^{**}_m ( \mathring{U}_{i,L}).$$ 

Fixons, pour tout $n$, $S_n$ et $S'_n$ des systèmes de représentants de $\HC_n$. Considérons la flèche
\begin{align*}
\tilde{\alpha}_{n,i} : \Z[\HC_n^2]  &\rightarrow  \Of^*( \mathring{U}_{i,L})/L^*  &\\
  \delta_{(H_a,H_b)} & \mapsto  \frac{l_a}{l_b} & a,b\in S_n,
\end{align*} 
elle induit par passage au quotient une application  $\alpha_{n,i}: \tilde{\rm D}_{1} (\Z)\to 
\Of^*( \mathring{U}_{i,L})/L^*$ donnée par \[\alpha_{n,i}(\mu)=\prod_{a,b\in S_n}(\frac{l_a}{l_b})^{\mu_{a,b}}\in \Of^*( \mathring{U}_{i,L})/L^*\] avec $\mu=(\mu_n)_n:=(\sum_{a,b\in \HC_n}\mu_{a,b}\delta_{H_a,H_b})_n\in\limp_n \tilde{\rm D}_{1,n} (\Z)=\tilde{\rm D}_{1} (\Z) $. On définit de même une flèche $\alpha'_{n,i}$ grâce aux systèmes de représentants $S'_n$. 

Pour $n'>n>i+1$, on observe les congruences suivantes grâce aux compatibilités des $\mu_n$ et au lemme \ref{claimlc} :
\begin{equation}\label{eq:congint1}
\alpha_{n',i}(\mu)\equiv\alpha_{n,i}(\mu)\equiv\alpha'_{n,i}(\mu)\pmod{L^*\Of^{**}_{n -i}( \mathring{U}_{i,L})},
\end{equation}
\begin{equation}\label{eq:congint2}
\alpha_{n,i+1}(\mu)|_{\mathring{U}_{i,L}}\equiv\alpha_{n,i}(\mu)\pmod{L^*\Of^{**}_{n -i}( \mathring{U}_{i,L})}.
\end{equation}
Ainsi, les deux suites $(\alpha_{n,i}(\mu))_n$ et $(\alpha'_{n,i}(\mu))_n$ sont de Cauchy et convergent vers la même valeur $\alpha_{i}(\mu)$ d'après \eqref{eq:congint1}. D'après \eqref{eq:congint2},  $\alpha_{i}(\mu)$ est la restriction de $\alpha_{i+1}(\mu)$ sur $\mathring{U}_{i,L}$ ce qui détermine $\alpha(\mu)\in \Of^*(\H_{ L}^d)/L^*$. 

Intéressons nous à l'image par $\alpha$ d'un Dirac $\delta_{H_a, H_b}$. Comme l'application $\alpha$ ne dépend pas du choix du système de représentant $(S_n)_n$, on le choisit de telle manière à ce  que $H_a$ et $H_b$ sont dans $S_n$ pour tout $n$ assez grand. Par définition, on a la relation $\alpha_{n,i}(\delta_{H_a, H_b})=\frac{l_a}{l_b}$ pour tout $i$ et pour $n$ assez grand. On en déduit l'égalité   $\alpha(\delta_{H_a, H_b})=\frac{l_a}{l_b}$ par convergence. L'unicité de $\alpha$ provient alors  de la densité  du module engendré par les Dirac.

Passons à l'application $\gamma$ (le procédé est le même pour $\beta$).   Puisque la série formelle du logarithme converge sur $\Of^{**} ( \mathring{U}_{i,L})$, on a 
$L^*\Of^{**} ( \mathring{U}_{i,L}) \subset \ker (d\log)$. Ainsi d'après \ref{claimlc}, pour tout $i>0$ et $\mu=(\sum_{a,b\in S_n}\mu_{a,b}\delta_{H_a,H_b})_n\in \tilde{\rm D}_{1} (L) $, la somme suivante ne dépend pas  de $n$ pour $n>i$ :
\[\gamma_i(\mu):=\sum_{a,b\in S_n}\mu_{a,b}d\log (\frac{l_a}{l_b})\in \hdr{1} ( \mathring{U}_{i,L}).\]
 De plus, si les éléments $\mu_{a,b}$ ci-dessus sont entiers, on a la relation  $\gamma_i(\mu)=d\log (\alpha_{n,i}(\mu))$ pour $n>i$. On vérifie aussi que $\gamma_i(\mu)$ est la restriction de $\gamma_{i+1}(\mu)$  sur $\mathring{U}_{i,L}$ ce qui détermine une application $\gamma :\tilde{\rm D}_{1} (L) \to \hdr{1} (\H_{ L}^d)$.

Pour la compatibilité avec le morphisme $\alpha$, on observe la congruence \[\alpha (\mu)|_{\mathring{U}_{i,L}}\equiv\alpha_{n,i}(\mu) \pmod{L^*\Of^{**}_{n -i}( \mathring{U}_{i,L})}\] pour $n>i$ et $\mu\in \tilde{\rm D}_{1} (\Z)$. Toujours par convergence de la série formelle du logarithme sur $\Of^{**}_{n -i}( \mathring{U}_{i,L})$, on en déduit la suite d'égalité  pour $n>i$\[d\log(\alpha(\mu))|_{\mathring{U}_{i,L}}=d\log(\alpha_{n,i}(\mu))=\gamma_i(\mu)=\gamma(\mu)|_{\mathring{U}_{i,L}}.\]
On a alors $d\log(\alpha(\mu)) =\gamma(\mu)$.

\end{proof} 

\begin{rem}

Dans la preuve, nous nous sommes servis d'une famille de systèmes de représentants $(S_n)_n$ des ensembles finis $(\HC_n)_n$ par des vecteurs unimodulaires dans $\OC_K^{d+1}$ pour construire la flèche $\alpha$ et nous avons montré qu'elle n'en dépendait pas.

Si l'on impose de plus à nos systèmes de représentants qu'ils vérifient $S_n\subset S_{n+1}$ et $a-b\in \varpi^n\OC_K^{d+1}$  pour tout élément $a,b\in S_{n+1}$ dont les projections dans $\HC_n$ coïncident (possible en procédant aux bonnes renormalisations), alors le procédé exhibé dans la preuve précédente permet de construire directement une flèche $ \tilde{\rm D}_1 (\Z) \to   \Of^*(\H_{ L}^d)$ telle l'image de $\delta_{H_a, H_b}$ est $\frac{l_a}{l_b}$ pour $a,b\in S_{n}$. Cette dernière fournirait alors un scindage à la suite exacte du théorème principal \ref{theoprincunite} qui dépend de la famille $(S_n)_n$. En particulier, la décomposition en produit qui en découle n'est pas $\gln_{d+1}(\OC_K)$-équivariante (et encore moins $G$-équivariante) car l'action de ce groupe permute les systèmes de représentants $(S_n)_n$. 

Pour une description plus canonique de $\Of^*(\H_{ L}^d)$ en tant qu'extension de $L^*$ et de $\tilde{\rm D}_1 (\Z)$ qui prend en compte l'action de $G$, nous renvoyons à l'article récent \cite{geh}.

\end{rem}

    Nous allons étudier l'application $\alpha$ dans la section suivante.
    
    \section{Propriétés de $\alpha$ et fin de la preuve} 
    
    \subsection{Résidus et cochaines harmoniques}
    
    Le but de ce paragraphe est de démontrer que ${\rm im}(\alpha)$ est un facteur direct de $\Of^*(\H_{ L}^d)/L^*$. Cela repose sur 
    le diagramme \[\xymatrix{ 
\tilde{\rm D}_{1} (\Z) \ar[r]^-{\alpha} \ar[d]^{} & \Of^*(\H_{ L}^d)/L^*\ar[d]^{d\log}  \\
\tilde{\rm D}_{1} (L) \ar[r]^-{\gamma} & \hdr{1} (\H_{ L}^d)  }
,\] mais nous avons besoin du raffinement suivant : 

\begin{prop}\label{intres}
On a un diagramme commutatif 
\[
\xymatrix{
\tilde{\rm D}_{1} (\Z) \ar[r]^-{\alpha} \ar[rd]^{\rm Id} &\Of^*(\H_{ L}^d) /L^* \ar[d]^{\tilde{\gamma}} \\
  & \tilde{\rm D}_{1} (\Z)
  }
  \]
  o\`u $\tilde{\gamma}$ est induite par $\gamma^{-1} \circ d \log$.
\end{prop}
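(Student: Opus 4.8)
The plan is to split the proposition into its substantive part and a formal part. The substantive part is that for every $u\in\Of^*(\H^d_L)$ the class $\gamma^{-1}(d\log u)$ already lies in the integral lattice $\tilde{\rm D}_1(\Z)\subset\tilde{\rm D}_1(L)$; this is exactly what makes it legitimate to define $\tilde\gamma\colon\Of^*(\H^d_L)/L^*\to\tilde{\rm D}_1(\Z)$ as the corestriction of $\gamma^{-1}\circ d\log$. The formal part is the commutativity $\tilde\gamma\circ\alpha=\id$, which I will read off from Proposition \ref{propintdist}.

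For the integrality I would invoke de Shalit's residue theory. By \cite{ds} (see also \cite{iovsp}) the isomorphism $\gamma\colon\tilde{\rm D}_1(L)\iso\hdr{1}(\H^d_L)$ of Theorem \ref{theoiovsp} is inverse to the residue map, which sends a de Rham class to the family of its residues along the irreducible components of the special fibre of the standard semistable formal model of $\H^d_K$ (glued from the local charts $\spf(\hat A_\sigma)$ of \eqref{eqhatasigma}). Those components are indexed by the vertices of $\BC\TC$, their residues satisfy the harmonicity relations dictated by the incidence combinatorics of the special fibre, and a harmonic cochain on $\BC\TC$ is precisely a measure of total mass $0$ on $\HC=\P^d(K)$, i.e. an element of $\tilde{\rm D}_1(L)$. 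The key point is then local: the residue of $d\log u$ along the component attached to a vertex $v$ is the order of vanishing along that component of a formal‑model avatar of $u$, hence an integer. One sees this concretely on the tubes over the cells, where Proposition \ref{propdécompsimpouv} gives $\H^d_{L,\mathring\sigma}\cong A_k\times\pro{C_{e_i-1}}{i=0}{k}$: the structure of invertible functions on the building blocks $A_k$, $C_r$ (computed in \cite{ds,scst}) shows that the restriction of $u$ there is, modulo $L^*$ and modulo $1+\Of^{++}$ — the latter being contained in $\ker d\log$ since the logarithm converges on $\Of^{**}$ — a monomial with \emph{integer} exponents in certain basic invertible functions built from the coordinates, and $d\log$ reads off exactly those exponents. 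Finally the resulting integrality glues automatically: as $\tilde{\rm D}_1(L)=\varprojlim_n L[\HC_n]^0$ with every $\HC_n$ finite, an element of $\tilde{\rm D}_1(L)$ whose finite‑level component at each level has integer coordinates lies in $\varprojlim_n\Z[\HC_n]^0=\tilde{\rm D}_1(\Z)$.

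Granting this, $\tilde\gamma$ is well defined and the triangle commutes for a formal reason. Proposition \ref{propintdist} gives $d\log\circ\alpha=\gamma\circ\iota$, where $\iota\colon\tilde{\rm D}_1(\Z)\flinj\tilde{\rm D}_1(L)$ is the evident injection induced by $\Z\hookrightarrow L$; applying $\gamma^{-1}$ and corestricting to $\tilde{\rm D}_1(\Z)$ one gets $\iota\circ(\tilde\gamma\circ\alpha)=\iota$, and since $\iota$ is injective this forces $\tilde\gamma\circ\alpha=\id_{\tilde{\rm D}_1(\Z)}$. In particular $\alpha$ is a split injection and $\Of^*(\H^d_L)/L^*={\rm im}(\alpha)\oplus\ker(\tilde\gamma)$, which is the statement that ${\rm im}(\alpha)$ is a direct factor.

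The main obstacle is the local integrality used above. One must pin down which invertible functions on $A_k$ and $C_r$ freely generate $\hdr{1}$ over $\Z$ under $d\log$, and check that de Shalit's residue map is normalized so that $d\log(l_a/l_b)$ corresponds to the integral class attached to $\delta_{H_a}-\delta_{H_b}$ rather than to a rational multiple of it — equivalently, that the residues are honestly the integral orders of vanishing and carry no hidden denominator. This normalization is consistent with Proposition \ref{propintdist}, which already determines $\gamma^{-1}\circ d\log$ on the functions $l_a/l_b$; once it is fixed, the remaining verifications are the bookkeeping of \cite{ds,iovsp}.
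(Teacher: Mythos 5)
Your overall strategy is the paper's: reduce to de Shalit's residue theory, prove local integrality of the residues via the product decomposition of the tubes over open cells, and deduce $\tilde\gamma\circ\alpha=\id$ formally from Proposition \ref{propintdist}. The formal part is fine, and the local computation (on $\H^d_{L,\mathring\sigma}\cong C_\sigma\times A_1$ one writes $u=g(1+h)z^{\alpha}$ and reads off ${\rm res}_\sigma(d\log u)=\alpha\in\Z$, the other two terms having zero residue) is exactly the paper's argument.

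The genuine gap is in the step you describe as ``the resulting integrality glues automatically.'' What the local computation gives is that the harmonic cochain ${\rm res}(d\log u)$ lies in $\CC^1_{\rm har}(\Z)$, i.e.\ that $\mu(\lambda_\sigma)\in\Z$ for every pointed $1$-simplex $\sigma$, where $\mu=\gamma^{-1}(d\log u)$ and $\lambda_\sigma\in\Lambda_1(\Z)$ is the test function computing ${\rm res}_\sigma$. To conclude $\mu\in\tilde{\rm D}_1(\Z)=\Lambda_1(\Z)^{\vee}$ you must know that integrality of $\mu$ on the $\lambda_\sigma$ forces integrality on all of $\Lambda_1(\Z)$ --- equivalently, that $\phi:\tilde{\rm D}_1(\Z)\to\CC^1_{\rm har}(\Z)$ is \emph{surjective}, not merely well defined. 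This does not follow from the inverse-limit description $\varprojlim_n L[\HC_n]^0$: nothing in your argument produces integrality of the finite-level coordinates of $\mu$, which are its values on indicator functions of compact opens, not on the $\lambda_\sigma$. The missing input is precisely the nontrivial combinatorial theorem of \cite{dsal} quoted as Theorem \ref{theodsal}, $\Lambda_1(\Z)=\left\langle\lambda_\sigma\right\rangle_{\Z\modut}$, from which the paper deduces $\phi:\tilde{\rm D}_1(\Z)\simeq\CC^1_{\rm har}(\Z)$. Your closing paragraph about normalizing $d\log(l_a/l_b)$ only addresses the easy inclusion $\phi(\tilde{\rm D}_1(\Z))\subseteq\CC^1_{\rm har}(\Z)$. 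A secondary imprecision: for $\hdr{1}$ the residues and harmonic cochains are indexed by pointed $1$-simplices of $\BC\TC$, not by vertices or irreducible components of the special fibre; the residue along an edge is the difference of the orders of vanishing of $u$ along the two adjacent components, which is why it is the exponent $\alpha$ of the annulus coordinate.
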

 
   La preuve de ce résultat est basée sur la théorie des résidus utilisée par de Shalit dans 
   \cite{ds} pour calculer la cohomologie de de Rham des espaces de Drinfeld. Nous allons donc commencer par rappeler ses constructions. Si $A$ est un groupe abélien, on note 
   $\CC_{\rm har}^k (A)$ l'ensemble des cochaines harmoniques d'ordre $k$ à  valeurs dans $A$. Il s'agit d'applications  $c : \widehat{\BC\TC}_{k} \to A$ vérifiant certaines conditions d'harmonicité, reliant la valeur d'un simplexe pointé à celle des simplexes contigus (plus une relation entre les différents pointages d'un même simplexe). Nous renvoyons à  \cite[ paragraphe 3.1]{ds} pour l'énoncé explicite de ces conditions (ces relations y sont données sur $K$, mais les combinaisons linéaires sont à coefficients dans $\Z$ et peuvent être vues dans n'importe quel groupe abélien).  

Nous allons maintenant construire un morphisme  $\hdr{k} (\H_{ L}^d)\to \CC_{\rm har}^k (L)$. 
Nous renvoyons le lecteur aux paragraphes \ref{paragraphbtgeosimpstd} et  \ref{paragraphhdksimpfer} pour les notations à suivre.
Fixons un simplexe pointé $\sigma\in\widehat{\BC\TC}_{k}$ : \[M_0\supsetneq M_1\supsetneq\cdots\supsetneq M_k \supsetneq \varpi M_0\]  de type $(e_0, e_1,\cdots, e_k)$ et donnons-nous une base adaptée  $(f_0,\cdots, f_d)$ (cf \ref{paragraphbtgeosimpstd}). Le choix de cette base détermine un isomorphisme 
\[ \H_{K,\mathring{\sigma}}^d \cong A_k \times \pro{C_{e_i-1}}{i=0}{k}\cong A_k\times C_\sigma.\]
On notera encore $A_k$ et $C_\sigma$ les changements de base à $L$.

On a une notion naturelle de r\'esidu sur $A_k$: si $\omega= \som{a_\nu z^\nu d \log(z_1)\wedge \cdots \wedge d \log(z_k)}{}{}  \in \Omega^k_{A_k}$ (avec les bonnes conditions de convergence sur les $a_\nu\in L$), alors ${\rm res}_{A_k} (\omega)= a_0$. \'Etendons-la \`a $\Omega^k_{\H_{ L,\mathring{\sigma}}^d}$ en écrivant 
\[ \Omega^k_{\H_{ L,\mathring{\sigma}}^d} =\drt{\Omega^s_{A_k} \widehat{\otimes}\Omega^t_{C_\sigma}}{s+t=k}{}   \] et en définissant ${\rm res}_\sigma : \Omega^k_{\H_{ L,\mathring{\sigma}}^d} \to \Omega^k_{A_k} \widehat{\otimes} \Of_{C_\sigma} \fln{{\rm res}_{A_k} \widehat{\otimes} {\rm Id}_{\Of_{C_\sigma}}}{} \Of_{C_\sigma}$. 
Par calcul direct, pour toute forme ferm\'ee $\omega$  dans $\Omega^k_{\H_{ L,\mathring{\sigma}}^d} $, ${\rm res}_\sigma(\omega)$ est une $0$-forme ferm\'ee donc un \'el\'ement de $L$. De m\^eme, on v\'erifie que ${\rm res}_\sigma(\omega)=0$ si $\omega$ est exacte et  que le résidu ne dépend pas de la décomposition en produit du tube au-dessus de $\mathring{\sigma}$ ni du choix de la base adaptée au simplexe $\sigma$. Cela d\'efinit l'application 
\[ {\rm res} : \hdr{k}(\H_{ L}^d) \to \CC^k_{\text{har}}(L), \, \omega\mapsto ( \sigma \mapsto {\rm res}_\sigma(\omega|_{\H_{ L,\mathring{\sigma}}^d})).\]
En effet, pour $\omega$ une $k$-forme fermée, ${\rm res}(\omega)$ vérifie bien les conditions d'harmonicité \cite[théorème 7.7]{ds}. Le r\'esultat principal de \cite{ds} montre 

\begin{theo}\label{theods}  \cite[ théorème 8.2 ]{ds}
Le morphisme ${\rm res}$ est un isomorphisme.

\end{theo}

Nous allons expliciter l'isomorphisme $\phi$ rendant commutatif le diagramme 
\[\xymatrix{ 
\hdr{k}(\H_{ L}^d) \ar[dr]^{\rm  res} &    \\
\tilde{\rm D}_{k} (L) \ar[u]^{\gamma^{(k)}}\ar[r]^-{\phi}_{\sim} & \CC^k_{\text{har}}(L) }.
\]
Pour tout  
$\sigma\in \widehat{\BC\TC}_k$, on définit  la fonction localement constante $\lambda_\sigma \in \Lambda_k (\Z)$ par  \[(H_{a_0},\cdots, H_{a_k})\mapsto {\rm res}_\sigma(d\log \frac{l_{a_1}}{l_{a_0}}\wedge d\log \frac{l_{a_2}}{l_{a_0}}\wedge \cdots \wedge d\log \frac{l_{a_k}}{l_{a_0}}|_{\H_{ L,\mathring{\sigma}}^d})\in\{-1,0,1\}.\] La valeur de $\lambda_\sigma$ en un $(k+1)$-uplet d'hyperplans est déterminée par l'arrangement de ces hyperplans par rapport aux modules $M_i$ définissant le simplexe $\sigma$ (cf \cite[ paragraphe 1.3]{dsal}). L'application recherchée est donnée par  

\[\phi : \mu \mapsto ( \sigma \mapsto \mu (\lambda_\sigma )).\] Pour vérifier la commutativité,  il suffit de travailler sur les masses de Dirac et cela découle de la construction des $\lambda_\sigma$. Comme les $\lambda_\sigma$ vivent dans $\Lambda_k (\Z)$, on peut construire pour tout groupe abélien un morphisme  $\tilde{\rm D}_{k} (A)\to \CC^k_{\text{har}}(A)$. En fait, on a le résultat remarquable : 

\begin{theo} \cite[ corollaire 2.2]{dsal} \label{theodsal}
On a
\[\Lambda_k (\Z)=\left\langle \lambda_\sigma, \sigma\in \widehat{\BC\TC}_k \right\rangle_{\Z\modut}.\]

\end{theo}


\begin{coro}

La flèche $\phi : \tilde{\rm D}_{k} (\Z)  \to \CC^k_{\rm har}( \Z)$ est un isomorphisme.

\end{coro}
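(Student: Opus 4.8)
Le plan est de ramener l'énoncé au théorème~\ref{theodsal} et à l'isomorphisme déjà disponible sur une extension complète $L$ de $K$. Je noterais $\phi_L : \tilde{\rm D}_{k}(L)\to\CC^k_{\rm har}(L)$ l'application $\mu\mapsto(\sigma\mapsto\mu(\lambda_\sigma))$, qui est le cas $A=L$ de la construction générale $\tilde{\rm D}_{k}(A)\to\CC^k_{\rm har}(A)$ ; la flèche $\phi$ du corollaire en est le cas $A=\Z$. D'après le diagramme qui précède, $\phi_L$ coïncide avec ${\rm res}\circ\gamma^{(k)}$, composée des isomorphismes des théorèmes~\ref{theoiovsp} et~\ref{theods} ; c'est donc un isomorphisme, et en particulier $\phi_L$ est surjective.

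Pour l'injectivité de $\phi$, j'utiliserais directement le théorème~\ref{theodsal} : les $\lambda_\sigma$, $\sigma\in\widehat{\BC\TC}_k$, engendrent le $\Z$-module $\Lambda_k(\Z)$, donc un élément $\mu\in\tilde{\rm D}_{k}(\Z)=\homm_\Z(\Lambda_k(\Z),\Z)$ tel que $\mu(\lambda_\sigma)=0$ pour tout $\sigma$ est nul.

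Pour la surjectivité, je partirais d'une cochaîne harmonique $c\in\CC^k_{\rm har}(\Z)$. Les relations d'harmonicité étant à coefficients entiers, on peut voir $c$ dans $\CC^k_{\rm har}(L)$, et la surjectivité de $\phi_L$ fournit $\mu_L\in\tilde{\rm D}_{k}(L)=\homm_L(\Lambda_k(L),L)$ avec $\mu_L(\lambda_\sigma)=c(\sigma)$ pour tout $\sigma$. L'inclusion ${\rm LC}(\HC^{k+1},\Z)\hookrightarrow{\rm LC}(\HC^{k+1},L)$ envoie $\Lambda_k(\Z)$ dans $\Lambda_k(L)$ et chaque $\lambda_\sigma$ sur le $\lambda_\sigma$ correspondant ; en composant avec $\mu_L$ j'obtiendrais un morphisme de groupes $\bar\mu:\Lambda_k(\Z)\to L$. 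Comme $\Lambda_k(\Z)$ est engendré par les $\lambda_\sigma$ (théorème~\ref{theodsal}) et que $\bar\mu(\lambda_\sigma)=c(\sigma)\in\Z$, le morphisme $\bar\mu$ est à valeurs dans $\Z$, d'où $\bar\mu\in\tilde{\rm D}_{k}(\Z)$ et $\phi(\bar\mu)=c$ par construction.

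Le principal obstacle ne réside pas dans cet assemblage, mais dans l'ingrédient qu'il mobilise, à savoir le théorème~\ref{theodsal} : le fait que les $\lambda_\sigma$ ne se contentent pas de vérifier les relations d'harmonicité mais \emph{engendrent} $\Lambda_k(\Z)$ sur $\Z$. C'est exactement ce qui force la fonctionnelle $\bar\mu$ associée à une cochaîne harmonique entière à rester entière, et ce qui évite d'avoir à décrire explicitement le $\Z$-module des relations entre les $\lambda_\sigma$ — c'est-à-dire de montrer à la main que les relations d'harmonicité épuisent ces relations, ce que l'isomorphisme $\phi_L$ permet de contourner.
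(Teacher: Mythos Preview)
Ta preuve est correcte et suit essentiellement la même démarche que celle du papier : passer par l'isomorphisme $\phi_L$ sur $L$, puis utiliser le théorème~\ref{theodsal} pour montrer qu'une mesure dont les valeurs sur les $\lambda_\sigma$ sont entières est entière sur tout $\Lambda_k(\Z)$. Le papier est simplement plus laconique — il n'explicite pas l'injectivité (qui découle immédiatement de celle de $\phi_L$ ou, comme tu le fais, du fait que les $\lambda_\sigma$ engendrent $\Lambda_k(\Z)$) et condense la surjectivité en l'équivalence $\forall\sigma,\ \mu(\lambda_\sigma)\in\Z \Leftrightarrow \forall\lambda\in\Lambda_k(\Z),\ \mu(\lambda)\in\Z$.
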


\begin{proof}
Soit $c\in \CC^k_{\text{har}}( \Z)\subset\CC^k_{\text{har}}( L)$ et $\mu=\phi^{-1}(c) \in \tilde{\rm D}_{k} (L)$ la mesure correspondante par \ref{theods}. On veut $\mu \in \tilde{\rm D}_{k} (\Z)$. Il suffit d'observer l'équivalence suivante qui est conséquence directe du théorème précédent
\[\forall \sigma\in \widehat{\BC\TC}_k, \mu (\lambda_\sigma)\in\Z \Leftrightarrow \forall \lambda\in \Lambda_k (\Z), \mu (\lambda)\in\Z.\]
\end{proof}


 
 

   Revenons maintenant à la preuve de la proposition \ref{intres}. 
D'après la discussion précédente il suffit de montrer pour toute fonction inversible $u$ et tout $\sigma\in \widehat{\BC\TC}_1$ de type $(e_0,e_1)$, ${\rm res}_\sigma(d \log(u)) \in \Z$. Puisque $A_1$ est une couronne ouverte, on a d'après \cite[Lemme 4.4.]{J1} une égalité 
\[\Of^*(\H_{ L,\mathring{\sigma}}^d) = \Of^* (C_{\sigma}\times A_1)= \Of^* (C_{\sigma}) \Of^{**}(\H_{ L,\mathring{\sigma}}^d) \times z^{\Z},\] où $z$ est la variable de $A_1$. Comme 
$u|_{ \H_{ L,\mathring{\sigma}}^d}$ est dans $\Of^*(\H_{ L,\mathring{\sigma}}^d)$, on peut donc décomposer $u=g(1+h)z^{\alpha}$ comme ci-dessus. On a
\[ d \log(u)= d \log(g) + d \log(1+h) + \alpha d \log(z).\] 
Or $d \log(g) \in \Omega^1_{C_{\sigma}}$, son r\'esidu est donc nul. La forme $d \log(1+h)$ est exacte (puisque $\log(1+h)$ est bien une fonction analytique), son r\'esidu est donc nul. Donc ${\rm res}_\sigma(d \log(u))= \alpha \in \Z$, ce qui permet de conclure.



\subsection{Cohomologie \'etale arithmétique}

Nous aurons besoin du premier degré de la cohomologie étale $l$-adique arithmétique de $\H_{ L}^d$.

\begin{prop}\label{diagfund}

Soit $l$ un entier premier à  $p$ (non nécessairement premier), $L\subset C$ une extension complète de $K$,  on a \[\kappa(\widetilde{{\rm im}(\alpha)})=\het{1}(\H_{ L}^d, \mu_l)\] où $\widetilde{{\rm im}(\alpha)}\subset \Of^*(\H_{ L}^d)$ est la préimage de ${{\rm im}(\alpha)}\subset \Of^*(\H_{ L}^d)/L^*$

\end{prop}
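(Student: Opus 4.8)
The statement to prove is Proposition \ref{diagfund}: for $l$ prime to $p$, $\kappa(\widetilde{\mathrm{im}(\alpha)}) = \het{1}(\H_L^d, \mu_l)$, where $\kappa$ is the arithmetic (not geometric) Kummer map. The plan is to reduce this to the already-established geometric statement (the left square of Proposition \ref{propintdist}, together with the fact that $\beta = \beta^{(1)}$ is an isomorphism by Theorem \ref{theoiovsp}) by means of the Hochschild–Serre / Kummer comparison between $\het{1}(\H_L^d,\mu_l)$ and $\het{1}(\H_C^d,\mu_l)$.

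First I would set up the arithmetic Kummer exact sequence for $\H_L^d$: since $\H_L^d$ is smooth, connected, and (by the Stein property, cf.\ \cite[Proposition 4]{scst}) has $\Of^*(\H_L^d)$ well understood, the Kummer sequence $1 \to \mu_l \to \G_m \xrightarrow{\cdot l} \G_m \to 1$ gives
\[
\Of^*(\H_L^d)/l \;\hookrightarrow\; \het{1}(\H_L^d,\mu_l) \;\to\; \pic(\H_L^d)[l].
\]
By \cite[Théorème 7.1.]{J1} the Picard group of $\H_K^d$ vanishes, and this persists after base change to $L$ (the argument there being geometric in nature, or one invokes that $\pic$ of the base changes is controlled similarly); hence $\kappa$ induces an isomorphism $\Of^*(\H_L^d)/l \iso \het{1}(\H_L^d,\mu_l)$. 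So the claim becomes: the subgroup $\widetilde{\mathrm{im}(\alpha)} \subset \Of^*(\H_L^d)$ surjects onto $\Of^*(\H_L^d)/l$, i.e.\ $\Of^*(\H_L^d) = \widetilde{\mathrm{im}(\alpha)} \cdot (\Of^*(\H_L^d))^l$.

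The key input is the commuting square relating $\alpha$ to $\beta = \beta^{(1)}$ from Proposition \ref{propintdist}: modulo $L^*$, $\bar\kappa \circ \alpha = \beta \circ (\text{reduction } \tilde{\rm D}_1(\Z) \to \tilde{\rm D}_1(\Z/l\Z))$. Since $\beta$ is surjective (indeed bijective) onto $\het{1}(\H_C^d,\mu_l)$ and $\tilde{\rm D}_1(\Z) \to \tilde{\rm D}_1(\Z/l\Z)$ is surjective (it is the reduction $\limp_n \Z[\HC_n]^0 \to \limp_n (\Z/l\Z)[\HC_n]^0$, surjective by the Mittag-Leffler property since the transition maps of the $\Z$-system are surjective and the terms are finitely generated), we conclude $\bar\kappa(\mathrm{im}(\alpha)) = \het{1}(\H_C^d,\mu_l)$. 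It remains to upgrade this from the geometric Kummer map $\bar\kappa$ to the arithmetic $\kappa$. For this I would compare the two via the edge map in Hochschild–Serre for $\gal(C/L)$: there is an exact sequence
\[
0 \to \hhh^1(\gal(C/L), \mu_l(C)) \to \het{1}(\H_L^d,\mu_l) \to \het{1}(\H_C^d,\mu_l)^{\gal(C/L)},
\]
and the Kummer maps fit into a compatible ladder, the left square being $\Of^*(\H_L^d)/l \to \Of^*(\H_C^d)/l$ over $\kappa$, $\bar\kappa$. Since $\kappa$ is already an isomorphism onto $\het{1}(\H_L^d,\mu_l)$ (by the $\pic$-vanishing argument), surjectivity of $\bar\kappa$ restricted to $\mathrm{im}(\alpha)$ onto the geometric $\hhh^1$, combined with the fact that $\alpha$ is $G$-equivariant and $\tilde{\rm D}_1(\Z)$ is built from $K$-rational (hence Galois-fixed) data, forces $\kappa(\widetilde{\mathrm{im}(\alpha)})$ to be all of $\het{1}(\H_L^d,\mu_l)$: any class in the kernel of restriction to $C$ comes from $\hhh^1(\gal(C/L),\mu_l)$, which is hit by the constant functions $L^* \subset \widetilde{\mathrm{im}(\alpha)}$ via the Kummer map applied to $L^*/\,(L^*)^l$, so nothing is missed.

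The main obstacle I anticipate is the last step — controlling the arithmetic-versus-geometric discrepancy cleanly. One must verify that the Hochschild–Serre edge term $\hhh^1(\gal(C/L),\mu_l(C)) \cong L^*/(L^*)^l$ (by local class field theory / Kummer theory for $L$) is exactly accounted for by the constants inside $\widetilde{\mathrm{im}(\alpha)}$ under $\kappa$, and that the restriction map $\het{1}(\H_L^d,\mu_l) \to \het{1}(\H_C^d,\mu_l)$ is actually surjective onto the Galois invariants (which again uses $\pic(\H_C^d)[l] = 0$ and $\hhh^2(\gal(C/L),\mu_l) = 0$ for the local field $L$, or its completion). Granting these standard facts about the cohomology of the absolute Galois group of a $p$-adic field, the diagram chase closes and the proposition follows.
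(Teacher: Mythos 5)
Your proof is correct and follows essentially the same route as the paper: reduce to the geometric statement via the Hochschild--Serre edge sequence, use the $\alpha$--$\beta$ compatibility together with the surjectivity of $\tilde{\rm D}_1(\Z)\to\tilde{\rm D}_1(\Z/l\Z)$ to get $\bar{\kappa}(\widetilde{{\rm im}(\alpha)})=\het{1}(\H_{C}^d,\mu_l)$, and absorb the kernel $\hgal{1}(\GC_L,\mu_l(C))\cong L^*/(L^*)^l$ using the constants $L^*\subset\widetilde{{\rm im}(\alpha)}$. Your preliminary detour through $\pic(\H_{L}^d)=0$ is unnecessary (the paper avoids it): the diagram chase in your last step already yields the full statement, with surjectivity onto the Galois invariants coming for free from $\Of^*(\H_{L}^d)\subset\Of^*(\H_{C}^d)^{\GC_L}$ and the surjectivity of $\bar{\kappa}$ on $\widetilde{{\rm im}(\alpha)}$, rather than from any $\hgal{2}$-vanishing or Picard-group input.
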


\begin{proof} Cela découle de la compatibilité entre $\alpha$ et l'application de Kummer, du calcul de la cohomologie étale géométrique \ref{theoiovsp} et 
du point  technique suivant: 

\begin{lem}\label{theoh1etarithm}

Soit $l$ un entier premier à  $p$ (non nécessairement premier), $X$ un $L$-espace analytique géométriquement connexe et $\bar{X}=X\hat{\otimes}C$. Supposons que  $\bar{\kappa} (\Of^*(X))=\het{1} (\bar{X},\mu_l)$. Alors, on a une suite exacte courte \[0\to L^*/(L^*)^l\to \het{1} (X,\mu_l)\to\het{1}(\bar{X}, \mu_l)\to 0.\]
Si, de plus, il existe un sous-groupe $H\subset \Of^*(X)$ $l$-saturé dans $\Of^*(\bar{X})$ vérifiant $\bar{\kappa} (H)=\het{1} (\bar{X},\mu_l)$, cette suite est scindée.
\end{lem}

\begin{proof}
La suite spectrale de Hochschild-Serre (où $\GC_L$ est le groupe de Galois absolu du corps $L$) \[E_2^{r,s} :\hgal{r} (\GC_L,\het{s}(\bar{X}, \mu_l))\Rightarrow \het{r+s} (X, \mu_l)\] induit une suite exacte \[0\to\hgal{1} (\GC_L,\mu_l(\bar{X}))\to \het{1}(X, \mu_l)\to \het{1}(\bar{X}, \mu_l)^{\GC_L}.\] 
  
   Considérons le diagramme commutatif

\[\xymatrix{ 
  & \het{1}(X, \mu_l) \ar[d]^{} \\
\Of^* (X) \ar[ru]^{\kappa}\ar[r]^-{\bar{\kappa}} & \het{1}(\bar{X}, \mu_l)^{\GC_L}}.
\]
L'application $\bar{\kappa}$ est $\GC_L$-équivariante et surjective (par hypothèse). Puisque 
$\Of^*(X)\subset \Of^*(\bar{X})^{\GC_L}$, on en déduit les égalités  $\het{1}(\bar{X}, \mu_l)^{\GC_L}=\het{1}(\bar{X}, \mu_l)=\bar{\kappa}(\Of^*(X))$, ainsi que la 
 surjectivité de $ \het{1}(X, \mu_l)\to \het{1}(\bar{X}, \mu_l)=\het{1}(\bar{X}, \mu_l)^{\GC_L}$.

On a la suite d'isomorphismes \[ \hgal{1} (\GC_L,\mu_l(\bar{X}))\cong \hgal{1} (\GC_L,\mu_l(C))\cong L^*/(L^*)^l.\] 
La première égalité provient de la connexité géométrique de $X$, la deuxième se déduit de la suite exacte de Kummer et du théorème de Hilbert 90. On en déduit la suite exacte courte \[0\to L^*/(L^*)^l\to \het{1} (X,\mu_l)\to\het{1}(\bar{X}, \mu_l)\to 0.\] 
 
Supposons maintenant l'existence du sous-groupe  $H$. Par hypothèse de surjectivité de $\bar{\kappa}_{|H}$, on obtient un diagramme commutatif où toutes les flèches sont surjectives : \[
\xymatrix{
\kappa (H) \ar[r]^{}  &\het{1}(\bar{X}, \mu_l)  \\
H/H^l \ar[u]^{\kappa} \ar[ru]_{ \bar{\kappa}}& 
  }.
  \] Par hypothèse de saturation sur $H$ la flèche diagonale est injective. Ainsi, les trois flèches du diagramme précédent sont bijectives, ce qui permet de scinder la suite exacte. 
\end{proof}

  \end{proof}
  
  \subsection{Preuve du théorème \ref{theoprincunite}\label{sssectionproofosthdk}}
  
  Nous allons prouver le théorème \ref{theoprincunite}. Prenons $l$ un entier premier à  $p$.
  Reprenons les notations dans la proposition \ref{intres}. Cette proposition montre que 
   $\ker(\tilde{\gamma})$ est un suppl\'ementaire de ${\rm im}(\alpha) \cong \tilde{\rm D}_{1} (\Z)$ dans $\Of^*(\H^d_{ L})/L^*$. On veut montrer que $\ker(\tilde{\gamma})=0$. Nous commençons par: 

\begin{lem}\label{lemlinfty}

$\ker(\tilde{\gamma})$ est $l$-divisible.

\end{lem}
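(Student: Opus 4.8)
The plan is to deduce the $l$-divisibility of $\ker(\tilde{\gamma})$ from the commutative square relating $\alpha$ to the Kummer map $\bar\kappa$, combined with the computation of geometric étale cohomology. First I would take an arbitrary class $\bar{u}\in\ker(\tilde{\gamma})\subset\Of^*(\H^d_L)/L^*$ and a lift $u\in\Of^*(\H^d_L)$. By the commutative diagram
\[
\xymatrix{
\tilde{\rm D}_{1} (\Z) \ar[r]^-{\alpha} \ar[d] & \Of^*(\H_{ L}^d)/L^*\ar[d]^{\bar{\kappa}}  \\
\tilde{\rm D}_{1} (\Z/l\Z) \ar[r]^-{\beta} & \het{1}(\H_{ C}^d, \mu_l)  }
\]
and the fact that $\beta$ is surjective (Theorem~\ref{theoiovsp}), the class $\bar\kappa(u)\in\het{1}(\H^d_C,\mu_l)$ lies in the image of $\alpha$ composed with $\bar\kappa$, i.e.\ there exists $\mu\in\tilde{\rm D}_1(\Z)$ with $\bar\kappa(\alpha(\mu))=\bar\kappa(\bar u)$.

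Next I would observe that $\bar\kappa$ is, up to the constants that have already been quotiented out geometrically, essentially the connecting map of the Kummer sequence on $\H^d_C$; since $\Of^*(\H^d_C)/C^*$ has no $C^*$-torsion issue here, the kernel of $\bar\kappa$ on $\Of^*(\H^d_L)/L^*$ is exactly the subgroup of $l$-th powers together with classes coming from $C^*/L^*$. More precisely, $\bar u\,\alpha(\mu)^{-1}$ maps to $0$ under $\bar\kappa$, so $\bar u\,\alpha(\mu)^{-1}\in (\Of^*(\H^d_L)/L^*)^l\cdot(\text{image of constants})$, and modulo $L^*$ the constant part is killed, giving $\bar u\equiv\alpha(\mu)\pmod{(\Of^*(\H^d_L)/L^*)^l}$. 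Now apply $\tilde\gamma$: since $\tilde\gamma\circ\alpha=\mathrm{Id}$ by Proposition~\ref{intres} and $\tilde\gamma(\bar u)=0$ by assumption, we get $\mu=\tilde\gamma(\alpha(\mu))\equiv\tilde\gamma(\bar u)=0$ in $\tilde{\rm D}_1(\Z)/l\,\tilde{\rm D}_1(\Z)$, i.e.\ $\mu\in l\,\tilde{\rm D}_1(\Z)$, say $\mu=l\nu$. Then $\alpha(\mu)=\alpha(\nu)^l$, so $\bar u$ itself is an $l$-th power in $\Of^*(\H^d_L)/L^*$; as $\tilde\gamma$ is a group homomorphism and $\bar u$ was an arbitrary element of $\ker(\tilde\gamma)$, its $l$-th root also lies in $\ker(\tilde\gamma)$ (because $\tilde\gamma(\text{root})^l=\tilde\gamma(\bar u)=0$ and $\tilde{\rm D}_1(\Z)$ is torsion-free), proving $l$-divisibility of $\ker(\tilde\gamma)$.

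The main obstacle I anticipate is the careful bookkeeping of the constants: one must make sure that passing from the \emph{geometric} statement $\bar\kappa(u)=0$ to a statement about $l$-th powers in $\Of^*(\H^d_L)/L^*$ does not introduce a genuine $C^*/L^*$ contribution. This is exactly where the arithmetic étale cohomology input of the previous subsection (Lemma~\ref{theoh1etarithm} and Proposition~\ref{diagfund}) enters: the exact sequence $0\to L^*/(L^*)^l\to\het1(\H^d_L,\mu_l)\to\het1(\H^d_C,\mu_l)\to 0$ together with $\kappa(\widetilde{\mathrm{im}(\alpha)})=\het1(\H^d_L,\mu_l)$ lets one upgrade the geometric computation to the arithmetic one, so that $\ker\bar\kappa$ on $\Of^*(\H^d_L)/L^*$ is controlled by $\mathrm{im}(\alpha)$ and $l$-th powers only. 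Once this is in hand the divisibility argument is the short diagram chase above; the proof should be only a few lines, citing Theorem~\ref{theoiovsp}, Proposition~\ref{intres}, Proposition~\ref{diagfund} and the torsion-freeness of $\tilde{\rm D}_1(\Z)=\Z\llbracket\HC\rrbracket^0$.
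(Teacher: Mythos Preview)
Your proposal is correct and shares the paper's core idea: use Proposition~\ref{diagfund} to match the Kummer class of a lift of $\bar u$ with that of some $\tilde g\in\widetilde{\mathrm{im}(\alpha)}$, so the quotient is an $l$-th power, then show an $l$-th root lies in $\ker(\tilde\gamma)$. Two differences are worth noting. First, the paper works directly with the \emph{arithmetic} Kummer map $\kappa$ and Proposition~\ref{diagfund}, never passing through the geometric diagram; your opening detour via $\bar\kappa$ and the worry about ``$C^*/L^*$ contributions'' is unnecessary, since $\ker\kappa=(\Of^*(\H^d_L))^l$ on the nose and Proposition~\ref{diagfund} already hands you the needed $\tilde g$ over $L$. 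Second, once $\bar u\cdot g^{-1}=w^l$ in $\Of^*(\H^d_L)/L^*$, the paper simply projects $w$ onto $\ker(\tilde\gamma)$ along the direct-sum decomposition $\ker(\tilde\gamma)\oplus\mathrm{im}(\alpha)$ from Proposition~\ref{intres}; since $g^{-1}\in\mathrm{im}(\alpha)$, the $\ker(\tilde\gamma)$-component of $w^l$ is exactly $\bar u$, so the projection of $w$ is an $l$-th root of $\bar u$ in $\ker(\tilde\gamma)$. Your alternative---deducing $\mu\in l\,\tilde{\rm D}_1(\Z)$, hence $\bar u$ itself is an $l$-th power, then invoking torsion-freeness of $\tilde{\rm D}_1(\Z)$---also works but is a couple of steps longer.
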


\begin{proof}

Soit $f$ dans $\ker(\tilde{\gamma})$ que l'on relève en $\tilde{f}\in \Of^*(\H^d_{ L})$. D'apr\`es la proposition \ref{diagfund}, on peut trouver $\tilde{g}$ dans $\widetilde{{\rm im}(\alpha)}$ tel que $\kappa (\tilde{g})=\kappa (\tilde{f})$. Alors $\tilde{f}\tilde{g}^{-1}$ 
admet une racine $l$-ième dans $\Of^*(\H^d_{ L})$ et donc une racine  dans $\Of^*(\H^d_{ L})/L^*=\ker(\tilde{\gamma})\oplus{\rm im}(\alpha)$ que l'on note $u$. L'élément $f$ est la composante de $u^l$ dans $\ker(\tilde{\gamma})$ donc, en projetant $u$ dans $\ker(\tilde{\gamma})$, on obtient une racine $l$-ième de $f$ dans $\ker(\tilde{\gamma})$.

\end{proof}

Il suffit donc de montrer que les fonctions inversibles $f$ de $\H_{ L}^d$, $l^\infty$-divisibles (modulo les constantes), sont en fait constantes. Pour cela, on la projette dans $\Of^*(\H_{ L, \sigma}^d)/L^*\Of^{**} (\H_{ L, \sigma}^d)$ qui est de type fini pour tout simplexe $\sigma$ de l'immeuble $\BC\TC$ d'après la proposition \ref{theoostrosen} ci-dessous (combiné avec les résultats du paragraphe \ref{paragraphhdksimpfer}). Mais $f$  est encore $l^\infty$-divisible dans $\Of^*(\H_{ L, \sigma}^d)/L^*\Of^{**} (\H_{ L, \sigma}^d)$ pour tout simplexe $\sigma$ d'où $\forall \sigma, f\in L^*\Of^{**} (\H_{ L, \sigma}^d)$.  

On écrit alors $f=\lambda_\sigma (1+g_\sigma)\in L^*\Of^{**} (\H_{ L, \sigma}^d)$. On a en particulier $\Vert f\Vert_{\H^d_{ L,\sigma}}=|\lambda_\sigma|$ pour tout $\sigma$. Soit maintenant deux simplexes $\sigma_1$ et $\sigma_2$ s'intersectant, alors $f=\lambda_{\sigma_1}(1+g_{\sigma_1})$ est encore une décomposition dans $L^*\Of^{**} (\H_{ L, \sigma_1\cap\sigma_2}^d)$ et on a \[\Vert f\Vert_{\H^d_{ L,\sigma_1}}=|\lambda_{\sigma_1}|=\Vert f\Vert_{\H^d_{ L,\sigma_1\cap\sigma_2}}=|\lambda_{\sigma_2}|=\Vert f\Vert_{\H^d_{ L,\sigma_2}}.\] Ainsi par connexité, la quantité $\Vert f\Vert_{\H^d_{ L,\sigma}}$ ne dépend pas de $\sigma$. D'où $\Vert f\Vert_{\H^d_{ L}}=\Vert f\Vert_{\H^d_{ L,\sigma}}<\infty$. La fonction $f$ est bornée donc constante d'après \cite[lemme 3]{ber5}.

\begin{prop}\label{theoostrosen}

Soit $P\in \OC_K[X_0,\cdots, X_d]$ non nul, 
 $Y=\spg(A)$ un affinoïde sur $L$ de la forme \[A=L\langle X_0,\cdots, X_d,  \frac{1}{P}\rangle /(X_0\cdots X_r-\varpi)\] alors le groupe $\Of^*(Y)/L^*\Of^{**} (Y)$ est un $\Z$-module de type fini.

\end{prop}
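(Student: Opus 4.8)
The statement asserts that for an affinoid $Y = \spg(A)$ with $A = L\langle X_0,\dots,X_d,\frac1P\rangle/(X_0\cdots X_r - \varpi)$, the group $\Of^*(Y)/L^*\Of^{**}(Y)$ is a finitely generated $\Z$-module. The natural strategy is to pass to the special fibre of the canonical formal model and reduce the question to a statement about invertible functions on an affine $\F$-variety, where finite generation of the unit group modulo constants is a classical phenomenon (a theorem of Rosenlicht / Samuel, whence the label \emph{theoostrosen}). First I would introduce the formal model $\YG = \spf(\AG)$ with $\AG$ the $\varpi$-adic completion of $\OC_K\langle X_0,\dots,X_d,\frac1P\rangle/(X_0\cdots X_r-\varpi)$ and its special fibre $\bar Y = \spec(\bar\AG)$, $\bar\AG = \AG/\varpi = \F[X_0,\dots,X_d,\frac1{\bar P}]/(X_0\cdots X_r)$. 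The key reduction is that the reduction map induces an isomorphism $\Of^*(Y)/L^*\Of^{**}(Y) \iso \Of^*(\bar Y)/\F^*$ (or at worst a map with controlled kernel and cokernel): an invertible function on $Y$ with sup-norm $1$ whose reduction is invertible is, up to the subgroup $\Of^{**}(Y) = 1 + \Of^{++}(Y)$ and the scalars $L^*$, determined by its reduction, and conversely every unit on $\bar Y$ lifts. This uses that $Y$ is reduced with good reduction in the relevant sense and that $\Of^+(Y)/\varpi = \bar\AG$.

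**Main step.** Granting this reduction, the problem becomes: show that $\Of^*(\bar Y)/\F^*$ is a finitely generated abelian group for $\bar Y = \spec\bigl(\F[X_0,\dots,X_d,1/\bar P]/(X_0\cdots X_r)\bigr)$. I would first normalize/decompose: the ring $\bar\AG$ is not a domain — it is the coordinate ring of a union of the $r+1$ coordinate hyperplanes $\{X_i = 0\}$ (intersected with $\{\bar P \neq 0\}$) glued along their intersections. Writing $\bar Y_i = \{X_i = 0\} \subset \bar Y$ for the irreducible components, each $\bar Y_i$ is (an open subset of) an affine space, and a unit on $\bar Y$ restricts to a unit on each $\bar Y_i$ agreeing on overlaps. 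One gets an exact sequence relating $\Of^*(\bar Y)$ to $\prod_i \Of^*(\bar Y_i)$ and the units on the pairwise intersections (a Mayer–Vietoris / normalization exact sequence). So it suffices to know finite generation of $\Of^*(\cdot)/\F^*$ for each component and each intersection stratum. Each stratum is of the form $\spec(\F[Y_1,\dots,Y_s, 1/Q])$ for some polynomial $Q$ (the image of $\bar P$), i.e. the complement in affine space of a hypersurface. For such an open subvariety of $\A^s_\F$, the unit group modulo constants is free of finite rank, generated by the irreducible factors of $Q$ — this is the elementary case of the Rosenlicht–Samuel theorem (invertible functions on an open subset of a factorial variety are, up to scalars, products of the equations of the removed prime divisors), and here it is completely explicit since $\F[Y_1,\dots,Y_s]$ is a UFD. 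Feeding these finitely generated pieces through the finitely many terms of the Mayer–Vietoris sequence yields that $\Of^*(\bar Y)/\F^*$ is finitely generated, hence so is $\Of^*(Y)/L^*\Of^{**}(Y)$.

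**Where the difficulty lies.** The routine part is the computation of units on a hypersurface complement in affine space. The delicate point is the reduction from the rigid affinoid $Y$ to its special fibre: one must argue that $\Of^+(Y) = \AG$ (i.e. the integral model is the right one, which follows from $A$ being reduced and the displayed presentation being normal enough — here one should check $\bar\AG$ is reduced, which it is since $X_0\cdots X_r$ is squarefree and $\bar P$ is a non-zero-divisor after localization), and that the sequence $1 \to \Of^{**}(Y) \to \{f \in \Of^*(Y) : \|f\| = \|f^{-1}\| = 1\} \to \Of^*(\bar Y) \to 1$ is exact — surjectivity being a lifting statement (Hensel-type, or $\varpi$-adic completeness) and the identification of the kernel being formal. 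Combined with $\Of^*(Y) = L^* \cdot \{f : \|f\| = \|f^{-1}\| = 1\}$ (splitting off the scalar of maximal norm, valid because $|L^*|$ is a nontrivial subgroup of $|C^*|$ that is coarse enough — actually one only needs this modulo $\Of^{**}$), this gives the desired isomorphism. I would also keep track of the possibility that $\Of^*(Y)/L^*\Of^{**}(Y)$ surjects onto $\Of^*(\bar Y)/\F^*$ with kernel that is itself visibly finitely generated (a quotient of $\mu_{\F^*}$ or similar), so that even if the isomorphism is not exact on the nose, finite generation is preserved. This reduction to the special fibre, and verifying reducedness of $\bar\AG$, is the main technical obstacle; everything downstream is classical commutative algebra over $\F$.
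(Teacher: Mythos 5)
Your treatment of the special fibre is essentially the paper's: restrict units to the irreducible components $V(X_i)$, observe each is a hypersurface complement in affine space over $\kappa$, and conclude by factoriality that $\Of^*(V(X_i))/\kappa^*$ is free of finite rank on the irreducible factors of $\bar P(X_0,\dots,0,\dots,X_d)$. (The paper uses only the injection into $\prod_i \Of^*(V(X_i))/\kappa^*$, which already suffices since subgroups of finitely generated abelian groups are finitely generated; your Mayer--Vietoris sequence is more than is needed.) The identification $\Of^+(Y)=\hat A$ that you flag as needing proof is obtained in the paper from Berkovich's results on pluri-nodal fibrations.

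However, your reduction step contains a genuine gap. You assert $\Of^*(Y)=L^*\cdot\{f:\|f\|=\|f^{-1}\|=1\}$ (at least modulo $\Of^{**}$), i.e.\ that every unit of $A$ can be rescaled to a unit of $\hat A$. This is false as soon as $r\geq 1$: each coordinate $X_i$ ($0\le i\le r$) is invertible in $A$ because $X_0\cdots X_r=\varpi$, yet $\|X_i\|\cdot\|X_i^{-1}\|=|\varpi|^{-1}>1$, a quantity invariant under scaling, so no multiple $\lambda X_i$ lies in $\hat A^*$; equivalently, $X_i$ reduces to a zero divisor (it vanishes on the component $V(X_i)$), not a unit, of $\bar A$. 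Consequently the map you want from $\Of^*(Y)/L^*\Of^{**}(Y)$ to $\Of^*(\bar Y)/\kappa^*$ is not defined on all classes, and the discrepancy is not the torsion "quotient of $\mu$" you hedge against but a free abelian group of rank $r$ generated by $X_0,\dots,X_{r-1}$. The paper handles exactly this point: it proves that any $u\in A^*$, after division by a scalar $\lambda$ with $u/\lambda\in\hat A\setminus\mG_L\hat A$ and by a monomial $X^\alpha$ whose exponents are the vanishing orders of $u/\lambda$ along the components $V(X_i)$, becomes a unit of $\hat A$, yielding $A^*/(L^*(1+\mG_L\hat A)\prod_{i=0}^{r-1}X_i^{\Z})\cong\bar A^*/\kappa^*$ and then finite generation by extension. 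Since $\Z^r$ is finitely generated the conclusion survives, but your argument as written breaks at the claim that every unit has invertible reduction, and the vanishing-order argument needed to repair it is the actual content of the first half of the paper's proof.
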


\begin{proof}
Quitte à renormaliser le polynôme $P$ et à prendre $r$ minimal, on peut supposer que les éléments $\varpi$ et $(X_i)_{0\le i\le r}$ ne divisent pas $P$. Notons $\hat{A}$ le compl\'et\'e $p$-adique de $\OC_L[ X_0,\cdots, X_d,  \frac{1}{P}] /(X_0\cdots X_r-\varpi)$ et $\bar{A}=\hat{A}/\mG_L \hat{A}$. 

L'argument consiste à établir les  résultats suivant  :
\begin{itemize}
\item L'inclusion $\hat{A}\subset A$ induit une bijection $ \hat{A}^*/ \OC_L^*(1+ \mG_L \hat{A})\to A^*/ (L^*(1+ \mG_L \hat{A})\prod_{i=0}^{r-1} X_i^{\Z})$.
\item  $\bar{A}^*/ \kappa^*$ est  de type fini sur $\Z$.
\end{itemize} 
Notons l'isomorphisme $\hat{A}^*/ \OC_L^*(1+ \mG_L \hat{A})\cong \bar{A}^*/ \kappa^*$. En particulier, ces deux faits montrent que $A^*/ (L^*(1+ \mG_L \hat{A})\prod_{i=0}^{r-1} X_i^{\Z})$ est de type fini. Mais c'est un quotient de $A^*/ L^*(1+ \mG_L \hat{A})$ par un sous-groupe de type fini ce qui conclut la preuve.

\'Etablissons le premier point. L'injectivité est claire et concentrons-nous sur la surjectivité. Cela revient à prouver que toute fonction inversible $u\in A^*$ est dans $\hat{A}^* L^*\prod_{i=0}^{r-1} X_i^{\Z}$. Nous avons besoin de quelques méthodes  de \cite{ber7}.
La composée suivante est un morphisme pluri-nodal au sens de \cite[Definition 1.1]{ber7}
\[\spf (\hat{A})\to \spf (\OC_L\langle X_0,\cdots, X_d  \rangle /(X_0\cdots X_r-\varpi))\to \spf (\OC_L)\]
car le premier morphisme est étale et le second est poly-stable (voir \cite[Definition 1.2]{ber7}). D'après \cite[Proposition 1.4]{ber7}, on a $\hat{A}=\Of^+(Y)$ et la norme spectrale prend ses valeurs dans $|L^*|$. Pour $u$ dans $A^*$, on peut alors trouver une constante inversible $\lambda$ telle que $u/\lambda \in \hat{A}\backslash\mG_L\hat{A}$. 

La fibre sp\'eciale $\spec(\bar{A})$ de $\spf (\hat{A})$ a exactement $r+1$ composantes irr\'eductibles $(V(X_i))_i$ et notons $\alpha_i$ l'ordre d'annulation de $u/\lambda$ sur  $V(X_i)$. Comme  $u/\lambda \neq 0$ dans $\bar{A}$, il existe $i_0$ tel que $\alpha_{i_0}=0$. Alors $X^{\alpha} \in \hat{A}$  a m\^eme ordre d'annulation que $u/\lambda$ sur chaque composante irr\'eductible et  $\frac{u}{\lambda X^\alpha}\in\hat{A}^*$ ce que  l'on voulait établir. 

Passons au deuxième point et étudions $\bar{A}^*/ \kappa^*$. Une fonction en fibre sp\'eciale est déterminée par sa restriction sur chacune des composantes irréductibles. De plus, elle est inversible  si et seulement si chacune de ces restrictions l'est. On a alors une injection \[\bar{A}^*/ \kappa^*\flinj\prod_{i=0}^{r}\Of^*(V(X_i))/\kappa^* \] et chaque terme est de la forme : \[ \Of^*(V(X_i)) \cong \kappa^* \times \prod_{j} (P^{(i)}_j )^{\Z}\] o\`u $P(X_0, \dots, X_{i-1}, 0, X_{i+1}, \dots , X_d)= \prod_j (P^{(i)}_j)^{\alpha_{i,j}}$ est la d\'ecomposition en produit d'irr\'eductibles dans\footnote{Notons que $V(X_i)$ est affine et $\Of(V(X_i))=\kappa[X_0, \dots, X_{i-1}, X_{i+1}, \dots , X_d, \frac{1}{P(X_0, \dots, X_{i-1}, 0, X_{i+1}, \dots , X_d)}]$.} $\kappa[X_0, \dots, X_{i-1}, X_{i+1}, \dots , X_d]$. Le résultat en découle.
\end{proof}

\begin{rem}\label{remostfin}
Si $X=\spg(A)$ est un $L$-affinoïde géométriquement connexe et géométriquement normal, avec $L$ une extension finie de $K$ (ou plus généralement si la ramification de $L/K$ est finie)  alors $\Of^*(X)/L^*\Of^{**} (X)$ est un $\Z$-module de type fini.  La preuve découle des arguments de \ref{theoostrosen} quand $L=K$. Dans le cas particulier de \ref{theoostrosen}, on peut facilement relier les fonctions inversibles pour $L$ quelconque au cas où $L=K$, et obtient ce résultat de finitude sur n'importe quel corps de base (pas forcément local).
\end{rem}

\nocite{dr1}
\newpage

\bibliographystyle{alpha}
\bibliography{inv_v1}
\end{document}